\newcommand{\fA}{{\mathfrak{A}}}
\newcommand{\fS}{{\mathfrak{S}}}
\let\la=\lambda
\def\irr#1{{\rm Irr}(#1)}
\def\syl#1#2{{\rm Syl}_{#1}(#2)}
\newtheorem{thm}{Theorem}[section]
\newtheorem{lem}[thm]{Lemma}
\newtheorem{cor}[thm]{Corollary}
\newtheorem{prop}[thm]{Proposition}
\newtheorem*{thmA}{Theorem A}
\newtheorem*{conjD}{Conjecture}
\theoremstyle{definition}
\newtheorem{defn}[thm]{Definition}
\newtheorem{notation}[thm]{Notation}
\theoremstyle{remark}
\begin{document}

\title[]{A note on restriction of Characters of Alternating groups to Sylow
Subgroups}

\author[Eugenio Giannelli]{Eugenio Giannelli}
\address[E. Giannelli]{Trinity Hall, University of Cambridge, Trinity Lane, CB21TJ, UK}
\email{eg513@cam.ac.uk}

\thanks{The first author's research was funded by Trinity Hall, University of Cambridge.}
 
\begin{abstract}
We restrict irreducible characters of alternating groups
of degree 
divisible by $p$  to
their Sylow $p$-subgroups and study the number 
of linear constituents.
\end{abstract}



\maketitle


\section{Introduction}   \label{sec:intro}

Let $p$ be a prime number and let $G$ be a finite group. We denote by $\mathrm{Irr}(G)$ the set of ordinary irreducible characters of $G$ and we let $P$ be a Sylow $p$-subgroup of $G$. 
It seems a very natural problem to study the restriction to $P$ of irreducible characters of $G$. Our interest in this line of research was initially motivated by the long-standing McKay Conjecture (see \cite{McKay} for the original statement and/or \cite[Section 2]{Malle} for a complete survey of the state of the art on this problem). 
For some special classes of groups it has been possible to show that the restrictions to a Sylow $p$-subgroup of irreducible characters of degree coprime to $p$ admit a unique linear (i.e. coprime to $p$) constituent. This allowed to show stronger forms of the McKay conjecture. This happens for example in \cite{G} and \cite{N}.

In \cite{GN} we studied the number of linear constituents of the restricted character $\chi_P$ for any $\chi\in\mathrm{Irr}(G)$ such that $p$ divides $\chi(1)$. 
As a consequence of the evidences collected, 
the following conjecture was proposed.

 \begin{conjD}
 Suppose that $\chi \in \irr G$ has degree divisible by $p$, and let $P \in \syl pG$.
 If $\chi_P$ has a linear constituent, then $\chi_P$ has at least $p$ different linear constituents.
 \end{conjD}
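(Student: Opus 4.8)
The plan is to deduce the conjecture for $G=\fA_n$ from the corresponding statement for $\fS_n$. Fix $P\in\syl p{\fS_n}$ and put $Q=P\cap\fA_n\in\syl p{\fA_n}$; recall $Q=P$ when $p$ is odd and $\lvert P:Q\rvert=2$ when $p=2$. Every $\psi\in\irr{\fA_n}$ is either an irreducible restriction $(\chi^{\lambda})_{\fA_n}$ with $\lambda\neq\lambda'$, or one of the two constituents $\chi^{\lambda}_{+},\chi^{\lambda}_{-}$ of $(\chi^{\lambda})_{\fA_n}$ with $\lambda=\lambda'$, where $\chi^{\lambda}$ is the irreducible character of $\fS_n$ labelled by $\lambda$. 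Since $p\mid\psi(1)$ forces $p\mid\chi^{\lambda}(1)$, it is enough to control the linear constituents of $(\chi^{\lambda})_{Q}$, and in the self-conjugate case of $(\chi^{\lambda}_{\pm})_{Q}$; I would treat the two families separately.

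For $\lambda\neq\lambda'$ and $p$ odd one has $\psi_{Q}=(\chi^{\lambda})_{P}$, so the claim is exactly the symmetric-group statement. For $p=2$ one transfers from $P$ to the index-two subgroup $Q$ by Clifford theory: since $(\chi^{\lambda})_{Q}$ is $P$-stable, any non-$P$-invariant linear constituent of $(\chi^{\lambda})_{Q}$ automatically comes with its distinct $P$-conjugate, and such a constituent is produced by any $2$-dimensional constituent of $(\chi^{\lambda})_{P}$ whose restriction to $Q$ is reducible; moreover the $P$-invariant linear constituents of $(\chi^{\lambda})_{Q}$ are precisely the restrictions of the linear constituents of $(\chi^{\lambda})_{P}$. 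Hence the only configuration needing separate treatment is the one in which the linear constituents of $(\chi^{\lambda})_{P}$ form a single orbit $\{\nu,\nu\epsilon\}$ under the sign character $\epsilon$ of $P/Q$ and $(\chi^{\lambda})_{P}$ has no $2$-dimensional constituent restricting reducibly to $Q$; this should be excluded using structural information on $(\chi^{\lambda})_{P}$ finer than the mere count of its linear constituents. For $\lambda=\lambda'$ I would use that $\chi^{\lambda}_{+}$ and $\chi^{\lambda}_{-}$ agree off the $\fA_n$-classes whose cycle type has pairwise distinct odd parts, and that the distinguishing cycle type is the multiset of principal (diagonal) hook lengths of $\lambda$. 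Such an element is a $p$-element precisely when every principal hook length of $\lambda$ is a power of $p$ --- impossible for $p=2$ unless $\lambda=(1)$, and for odd $p$ confining $\lambda$ to a very restricted family for which one expects $\chi^{\lambda}(1)$ to be coprime to $p$, placing $\chi^{\lambda}_{\pm}$ outside the hypothesis of the conjecture. Away from this family $(\chi^{\lambda}_{\pm})_{Q}=\tfrac12(\chi^{\lambda})_{Q}$, so the count of linear constituents is inherited from $\fS_n$.

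It remains to establish the symmetric-group statement, which is the real engine of the argument. Writing $n=\sum_{i}a_ip^i$ one has $P\cong\prod_{i}P_{p^i}^{a_i}$ inside the Young subgroup $\prod_{i}\fS_{p^i}^{a_i}$, where $P_{p^k}=C_p\wr\dots\wr C_p$; by the Littlewood--Richardson rule $(\chi^{\lambda})_{P}$ is a sum of outer tensor products of characters $(\chi^{\mu})_{P_{p^k}}$ with $\mu\vdash p^k$, and the set of linear constituents of such a product is the product of the sets of linear constituents of the factors. Using $p\mid\chi^{\lambda}(1)$ to single out an occurring summand with a factor that is either of non-coprime degree or already carries $p$ linear characters, the problem reduces to showing: for $\mu\vdash p^k$ with $p\mid\chi^{\mu}(1)$, the character $(\chi^{\mu})_{P_{p^k}}$ has no linear constituent or at least $p$. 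I would prove this by induction on $k$ through the Clifford theory of $P_{p^k}=(P_{p^{k-1}})^{p}\rtimes C_p$: the cyclic top group permutes $\irr{(P_{p^{k-1}})^{p}}$, a linear character of $P_{p^k}$ over a $C_p$-fixed linear character of the base admits exactly $p$ extensions, and feeding the branching of $\chi^{\mu}$ to $\fS_{p^{k-1}}\wr\fS_p$ into the inductive hypothesis shows that the set of linear constituents of $(\chi^{\mu})_{P_{p^k}}$, once nonempty, is a union of complete $C_p$-fibres. The hard part will be exactly this inductive step --- organizing the plethystic and Clifford-theoretic decomposition of characters of the wreath product so that one linear constituent always drags along a whole $C_p$-orbit, and in particular exploiting the hypothesis $p\mid\chi^{\mu}(1)$ to exclude the coprime-degree characters $\chi^{\mu}$, which may carry between $1$ and $p-1$ linear constituents --- together with, on the alternating side, the $p=2$ passage from $P$ to $Q$ isolated above.
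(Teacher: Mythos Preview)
Your odd-prime reduction is close in spirit to the paper's, though organized differently: the paper does not invoke the distinguishing class at all, but simply evaluates $\phi$ at a fixed element $\omega_n\in P_n$ of cycle type given by the $p$-adic digits of $n$, uses the Murnaghan--Nakayama rule together with Macdonald's criterion to get $\chi^\lambda(\omega_n)=0$, and quotes the character formula for $\phi^\lambda_\pm$ (which gives $\phi(\omega_n)=\tfrac12\chi^\lambda(\omega_n)=0$ automatically under the hypothesis $p\mid\chi^\lambda(1)$). Your route via the distinguishing class would also work, but it rests on the assertion that a self-conjugate $\lambda$ whose principal hooks are all powers of $p$ must have $p\nmid\chi^\lambda(1)$; you only say ``one expects'' this, so that step is not actually proved in your write-up.

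The genuine gap is at $p=2$. You correctly isolate the dangerous configuration --- all linear constituents of $(\chi^\lambda)_P$ collapsing to a single linear character of $Q$ --- but then write only that it ``should be excluded using structural information finer than the mere count''. That is precisely the heart of the paper, and it is not a soft Clifford-theoretic wrinkle. The paper splits into two regimes. When $n$ has at least two even binary digits, one constructs (depending on the shape of the $2$-core tower of $\lambda$) an explicit element $g_\lambda\in Q_n$ of carefully chosen cycle type on which $\chi^\lambda$ vanishes while every linear character of $P_n$ takes values $\pm 1$; evaluating at $g_\lambda$ kills the bad configuration. When $n\in\{2^k,2^k+1\}$ no such element exists in general, and the argument becomes genuinely combinatorial: one shows that if $\nu_2(\chi^\lambda(1))=1$ and the bad configuration held, then $(\chi^\lambda)_{P_{2^k}}$ would have exactly two linear constituents, each with multiplicity one (via an evaluation at an element of cycle type $(2^{k-1},2^{k-1})$ and Lemma~\ref{lem:farahat}), and then rules this out by exhibiting, through explicit Littlewood--Richardson fillings and the $\Delta^2$-operator, at least three linear constituents of the form $f(\rho)\times f(\rho)$ in $(\chi^\lambda)_{P_q\times P_q}$. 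None of this machinery --- the vanishing elements in $Q_n$, the case split on $n$, the multiplicity-one lemma, or the explicit LR constructions for $n=2^k$ --- appears in your plan, and I do not see how your Clifford-theoretic framing alone produces it.

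Two smaller remarks. First, you devote a paragraph to re-proving the symmetric-group statement via wreath-product induction; the paper simply cites this as a known theorem, and your inductive sketch is itself incomplete (you acknowledge the plethystic step is ``the hard part''), so this adds uncertainty rather than removing it. Second, for $p=2$ and $\lambda=\lambda'$ you again appeal to ``inherited from $\fS_n$'', but since $Q\neq P$ here the same collapsing obstruction applies; the paper handles this uniformly via a short lemma showing that any linear constituent of $(\chi^\lambda)_{P_n}$ restricts to a constituent of $(\phi^\lambda_\pm)_{Q_n}$, so the self-conjugate case is not a separate difficulty once the $\lambda\neq\lambda'$ case is done.
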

 
The above statement has been verified for various classes of groups in \cite{GN}. In particular, when $G$ is the symmetric group $\fS_n$ it is shown that the restriction to a Sylow $p$-subgroup of any irreducible character of degree divisible by $p$ has at least $p$ distinct linear constituents. 
 
The aim of this note is to prove that the above statement holds for alternating groups. 
In particular for any natural number $n$ we let $\fA_n$ be the alternating group of degree $n$ and we denote by $Q_n$ a Sylow $p$-subgroup of $\fA_n$. 
The main result of this note is the following.

\begin{thmA}
Let $n$ be a natural number and let $p$ be a prime. 
If $\chi \in {\rm Irr}(\fA_n)$  has 
degree divisible by $p$, then
the restriction $\chi_{Q_n}$ has at least $p$ different  
linear constituents.
\end{thmA}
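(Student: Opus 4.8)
The plan is to deduce Theorem~A from the analogous statement for symmetric groups (established in \cite{GN} and recalled above), by accounting for the two features that distinguish $\Irr(\fA_n)$ from $\Irr(\fS_n)$: the splitting of $(\chi^\lambda)_{\fA_n}$ when $\lambda$ is self-conjugate, and, when $p=2$, the fact that a Sylow $p$-subgroup $P_n$ of $\fS_n$ properly contains a Sylow $p$-subgroup $Q_n$ of $\fA_n$.

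First I would make the routine reductions. We may assume $p\mid|\fA_n|$, and we fix a partition $\lambda$ of $n$ with $\chi$ a constituent of $(\chi^\lambda)_{\fA_n}$. Then $p\mid\chi^\lambda(1)$: this is clear if $(\chi^\lambda)_{\fA_n}$ is irreducible, and if $\lambda=\lambda'$ then $\chi^\lambda(1)=2\chi(1)$, which is divisible by $p$ since $p\mid\chi(1)$. Hence, by the symmetric-group case, $(\chi^\lambda)_{P_n}$ has at least $p$ distinct linear constituents.

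Next I would dispose of the splitting. Suppose $\lambda=\lambda'$, so that $(\chi^\lambda)_{\fA_n}=\chi+\chi'$ with $\chi(1)=\chi'(1)$; I claim $\chi_{P_n}=\chi'_{P_n}$. The virtual character $\chi-\chi'$ vanishes outside the two conjugacy classes of $\fA_n$ into which a single class of $\fS_n$ splits, and these consist of the permutations whose cycle type is the set $H$ of principal hook lengths of $\lambda$ (the hook lengths of its diagonal cells) --- a set of pairwise distinct odd integers. If some element of the $p$-group $P_n$ had cycle type $H$, then every member of $H$ would be a power of $p$; but a self-conjugate partition all of whose principal hook lengths are powers of $p$ has degree coprime to $p$ (a direct computation with the hook-length formula, using that off the diagonal the hook lengths of a self-conjugate partition occur in equal pairs), contradicting $p\mid\chi^\lambda(1)$. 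Hence $(\chi-\chi')_{P_n}=0$ and $\chi_{P_n}=\tfrac12(\chi^\lambda)_{P_n}$. It follows that in every case $\chi_{Q_n}$ has the same linear constituents as $(\chi^\lambda)_{Q_n}$ (being equal to it, or to half of it), so it is enough to show that $(\chi^\lambda)_{Q_n}$ has at least $p$ distinct linear constituents.

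When $p$ is odd this is immediate: every element of $P_n$ is a product of cycles of odd length (their lengths being powers of the odd prime $p$), so $P_n\subseteq\fA_n$ and $Q_n=P_n$, whence $(\chi^\lambda)_{Q_n}=(\chi^\lambda)_{P_n}$ already has the required linear constituents. The case $p=2$ is the main obstacle. Here $Q_n=P_n\cap\fA_n$ has index $2$ in $P_n$, and two distinct linear constituents of $(\chi^\lambda)_{P_n}$ restrict to the same character of $Q_n$ precisely when one is the other multiplied by the sign character of $\fS_n$, while a degree-$2$ constituent of $(\chi^\lambda)_{P_n}$ yields two distinct linear constituents of $(\chi^\lambda)_{Q_n}$ precisely when it is induced from a linear character of $Q_n$. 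So what has to be excluded is that $(\chi^\lambda)_{P_n}$ has exactly two linear constituents, differing by the sign character, and no degree-$2$ constituent induced from $Q_n$. I expect to rule this out by re-running the inductive analysis of $(\chi^\lambda)_{P_n}$ along the iterated wreath-product structure of $P_n$, in the style of \cite{GN}, but tracking the sign character at each wreathing step, so as to produce either two linear constituents in different cosets of the sign character or a degree-$2$ constituent induced from $Q_n$. Pushing this finer bookkeeping through the recursion is the technical heart of the argument.
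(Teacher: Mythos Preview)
Your reduction is sound and, for the self-conjugate case, arguably cleaner than the paper's. The observation that no element of $P_n$ can have the split-class cycle type (distinct odd principal hook lengths) without forcing $\chi^\lambda(1)$ to be coprime to $p$ is correct: if the principal hooks are distinct powers of $p$, then $n$ has $p$-adic digits $0$ and $1$, and successively removing the unique largest $p$-power hook shows via Macdonald's criterion that $\chi^\lambda$ has $p'$-degree. (Your parenthetical ``direct computation with the hook-length formula'' undersells this; the inductive Macdonald argument is the right justification.) This yields $\chi_{P_n}=\tfrac12(\chi^\lambda)_{P_n}$, and the odd-prime case then follows immediately from the symmetric-group theorem. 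The paper reaches the same conclusion for odd $p$ by evaluating at the element $\omega_n$ and invoking \cite[2.5.13]{JK}; your argument makes transparent the point the paper leaves implicit, namely that $\omega_n$ never lies in the split class.

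The genuine gap is at $p=2$. You have correctly isolated the bad configuration---$(\chi^\lambda)_{P_n}$ has exactly two linear constituents differing by the sign character and no higher-degree constituent contributing a new linear character of $Q_n$---but ``re-running the inductive analysis tracking the sign'' is not a proof, and it is not clear that this strategy can be made to work. The paper's route is quite different and does not proceed by a uniform wreath-product recursion. For $n$ with at least two nonzero even binary digits, the paper constructs, depending on the $2$-core tower of $\lambda$, an explicit element $g_\lambda\in Q_n$ (a product of suitably chosen $2$-power cycles, sometimes splitting one $2^{n_j}$-cycle into two $2^{n_j-1}$-cycles to land in $\fA_n$) on which $\chi^\lambda$ vanishes while every linear character of $P_n$ takes value $\pm1$ and every even-degree irreducible of $P_n$ vanishes; this forces two linear constituents to disagree on $Q_n$. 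For $n\in\{2^k,2^k+1\}$ no such element is available when $\nu_2(\chi^\lambda(1))=1$ (indeed $\chi^\lambda(\gamma_k)=\pm2$, not $0$), and the paper instead carries out a hands-on Littlewood--Richardson analysis: it pins down the shape of $\lambda$, exhibits two hook partitions $\mu,\nu$ of $2^{k-1}$ (or uses $\Delta^2(\lambda)$) with $\chi^\rho\times\chi^\rho$ a constituent of $(\chi^\lambda)_{\fS_{2^{k-1}}\times\fS_{2^{k-1}}}$, and then invokes the bijection of \cite{G} and a multiplicity count to produce two linear constituents of $(\chi^\lambda)_{Q_{2^k}}$. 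This combinatorial work is the substance of the $p=2$ case, and your proposal does not supply a substitute for it.
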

 
When $p$ is an odd prime, Theorem A follows quite directly from the corresponding statement for symmetric groups (see Section \ref{sec: 3.1} below). 
On the other hand in order to prove Theorem A for $p=2$, we will need to use some new algebraic and combinatorial ideas (this is done in Section \ref{sec: 3.2}).  

 \section{Background}

We start by recalling some very basic combinatorial definitions and notation in the framework of the representation theory of symmetric groups. We refer the reader to \cite{James} or \cite{OlssonBook} for a more detailed account. 

A composition $\lambda=(\lambda_1,\lambda_2,\dots ,\lambda_s)$ is a finite  sequence of positive integers. We say that $\lambda_i$ is a part of $\lambda$ and that $\lambda$ is a composition of $|\lambda|=\sum\lambda_i$. We say that $\lambda$ is a partition if its parts are non-increasingly ordered and we denote by $\lambda'$ the conjugate partition of $\lambda$. The Young diagram associated to $\lambda$ is the set $[\lambda]:=\{(i,j)\in{\mathbb N}\times{\mathbb N}\mid 1\leq i\leq s,1\leq j\leq\lambda_i\}$.
For any natural number $n$ we denote by $\mathcal{P}(n)$ (respectively $\mathcal{C}(n)$) the set of partitions (respectively compositions) of $n$. 
We will sometimes write $\lambda\vdash n$ if $\lambda\in\mathcal{P}(n)$. 
Following the notation used in \cite{OlssonBook}, given $(i,j)\in [\lambda]$ we denote by $H_{i,j}(\lambda)$ the corresponding $(i,j)$-hook. Moreover we let $h_{i,j}(\lambda)=|H_{i,j}(\lambda)|$. Finally, for any natural number $e$ we denote by $\mathcal{H}_e(\lambda)$ the subset of $[\lambda]$ defined by 
$$\mathcal{H}_e(\lambda)=\{(i,j)\in [\lambda]\ :\ e\mid h_{i,j}(\lambda)\}.$$
Given an $e$-hook $h:=H_{i,j}(\lambda)$ we denote by $\lambda - h$ the partition of $n-e$ obtained from $\lambda$ by removing its $(i,j)$-rim hook (we refer the reader to \cite{OlssonBook} for the formal definition). 
We denote by $C_e(\lambda)$ and $Q_e(\lambda)$ the $e$-core and the $e$-quotient of $\lambda$, respectively. Finally we let $w_e(\lambda)$ be the $e$-weight of $\lambda$.

Let $\mu$ and $\lambda$ be partitions. We say that $\mu$ is a \textit{subpartition} of $\lambda$, written $\mu\subseteq\lambda$, if $\mu_i\leq\lambda_i$, for all $i\geq1$. When this occurs, we let
$[\lambda\smallsetminus\mu]=\{(i,j)\in{\mathbb N}\times{\mathbb N}\mid 1\leq i\leq s,\mu_i<j\leq\lambda_i\}.$ 
This is called a skew Young diagram.

\subsection{Characters of $\fS_n$ and $\fA_n$}
Irreducible characters of $\fS_n$ are naturally labelled by partitions of $n$. Given $\lambda\in\mathcal{P}(n)$ we denote by $\chi^\lambda$ the corresponding element of $\mathrm{Irr}(\fS_n)$. 
If $\lambda\neq \lambda'$, then we denote by $\phi^\lambda$ the irreducible character of $\mathfrak{A}_n$ defined by $\phi^\lambda=(\chi^\lambda)_{\mathfrak{A}_n}$. On the other hand, if $\lambda=\lambda'$ then we let $\phi^\lambda_+$ and $\phi^\lambda_-$ be the two irreducible characters of $\mathfrak{A}_n$ such that $(\chi^\lambda)_{\mathfrak{A}_n}=\phi^\lambda_++\phi^\lambda_-$. Clearly we have that $(\phi_+^\lambda)^{\sigma}=\phi^\lambda_-$, for any $\sigma\in\fS_n\smallsetminus\fA_n$.

\medskip

A key ingredient in our proof will be a rather sophisticated use of the Littlewood-Richardson rule (see \cite[Chapter 16]{James}). For the convenience of the reader we recall this here.  

\begin{defn}
Let ${\mathcal A}=a_1,\dots,a_k$ be a sequence of positive integers. The type of $\mathcal{A}$ is the sequence of non-negative integers $m_1,m_2,\dots$ where $m_i$ is the number of occurrences of $i$ in $a_1,\dots,a_k$. We say that $\mathcal{A}$ is a \textit{reverse lattice sequence} if the type of its prefix $a_1,\dots,a_j$ is a partition, for all $j\geq1$. 
%

Let $\mu\vdash n$ and $\nu\vdash m$. The outer tensor product $\chi^\mu\times\chi^\nu$ is an irreducible character of ${\mathfrak S}_n\times{\mathfrak S}_m$. Inducing this character to ${\mathfrak S}_{n+m}$ we may write
$$
(\chi^\mu\times\chi^\nu)^{{\mathfrak S}_{n+m}}=\sum_{\lambda\vdash(n+m)}C_{\mu,\nu}^\lambda\chi^\lambda.
$$
The \textit{Littlewood-Richardson rule} asserts that $C_{\mu,\nu}^\lambda$ is zero if $\mu\not\subseteq\lambda$ and otherwise equals the number of ways to replace the nodes of the skew diagram $[\lambda\smallsetminus\mu]$ by natural numbers such that: 
the numbers are weakly increasing along rows;  
the numbers are strictly increasing down the columns;
the sequence obtained by reading the numbers from right to left and top to bottom is a reverse lattice sequence of type $\nu$.
We call any such configuration a \textit{Littlewood-Richardson configuration of type} $\nu$ for $[\lambda\smallsetminus\mu]$.
\end{defn}

The Murnaghan-Nakayama rule (see \cite[2.4.7]{JK}) allows to recursively compute the character table of symmetric groups. 
The following fact (see \cite[2.7.33]{JK}) is a very useful consequence of this. 

\begin{lem}\label{lem:farahat}
Let $\la\vdash n$, $Q_e(\la)=(\la_0,\ldots,\la_{e-1})$.
Assume that $w\ge w_e(\la)$, and let $\rho \in \fS_n$
be  the product of $w$ $e$-cycles;
let $\gamma \in \fS_n$ act on the fixed points of $\rho$.
Then
$$\chi^{\la}(\rho\gamma)=
\begin{cases}
\pm \binom{w}{|\la_0|,|\la_1|,\ldots,|\la_{e-1}|}\chi^{C_e(\la)}(\gamma)\prod_{i=0}^{e-1} \chi^{\la_i}(1) &  \text{if } w=w_e(\la) \\
 \rm{0} &  \text{if } w>w_e(\la)
\end{cases} \:.$$
\end{lem}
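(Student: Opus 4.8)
The plan is to evaluate $\chi^{\la}(\rho\gamma)$ by iterating the Murnaghan--Nakayama rule, peeling off one at a time the $w$ disjoint $e$-cycles that constitute $\rho$. Since $\gamma$ is supported on the fixed points of $\rho$ (so $\rho\gamma=\gamma\rho$ has cycle type $(e^{w})$ together with the cycle type of $\gamma$), after $w$ applications of the rule one obtains
\[
\chi^{\la}(\rho\gamma)=\sum_{(h_1,\dots,h_w)}(-1)^{\ell(h_1)+\cdots+\ell(h_w)}\,\chi^{\la-h_1-\cdots-h_w}(\gamma),
\]
where the sum runs over all sequences in which $h_1$ is an $e$-rim hook of $\la$, $h_2$ an $e$-rim hook of $\la-h_1$, and so on, and $\ell(h)$ denotes the leg length of the rim hook $h$. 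Thus the whole problem reduces to understanding these admissible sequences of $e$-rim hook removals and the parity of their total leg length.

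First I would dispose of the case $w>w_e(\la)$. By Nakayama's theory of $e$-cores and $e$-quotients (see \cite{OlssonBook} or \cite{James}), successively removing $e$-rim hooks from $\la$ always terminates, after exactly $w_e(\la)$ steps, at the $e$-core $C_e(\la)$, which admits no $e$-rim hook. Hence when $w>w_e(\la)$ there is no admissible sequence of length $w$, the displayed sum is empty, and $\chi^{\la}(\rho\gamma)=0$.

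For $w=w_e(\la)$, every admissible sequence of length $w$ necessarily ends at $C_e(\la)$, so the factor $\chi^{\la-h_1-\cdots-h_w}(\gamma)=\chi^{C_e(\la)}(\gamma)$ comes out of the sum and it remains to evaluate $\sum(-1)^{\ell(h_1)+\cdots+\ell(h_w)}$. Here I would pass to the abacus display of $\la$ on $e$ runners: a single $e$-rim hook removal is the move of one bead one slot up some runner $i$, and this corresponds exactly to the removal of a removable node of the $i$-th component $\la_i$ of the quotient $Q_e(\la)$. Hence an admissible sequence of length $w$ is precisely the data of a maximal node-removal sequence of each $\la_i$ --- of which there are $\chi^{\la_i}(1)$, one for each standard Young tableau of shape $\la_i$ --- together with a record, among the $w$ steps, of which step acts on which runner; since $|\la_0|+\cdots+|\la_{e-1}|=w$, there are $\binom{w}{|\la_0|,\dots,|\la_{e-1}|}$ such interleavings. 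This already accounts for the numerical factor $\binom{w}{|\la_0|,\dots,|\la_{e-1}|}\prod_{i=0}^{e-1}\chi^{\la_i}(1)$.

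The remaining --- and genuinely delicate --- point is the sign: one must show that $(-1)^{\ell(h_1)+\cdots+\ell(h_w)}$ is \emph{the same} for every admissible sequence, so that it can be extracted as a single global $\pm$ with no cancellation. On the abacus this is transparent: attach to each bead configuration $B$ the sign of the permutation sorting its sequence of bead positions into a fixed reference order; when a bead is moved from position $b$ to position $b-e$, the leg length of the corresponding rim hook equals the number of occupied positions strictly between $b-e$ and $b$, which is exactly the number of adjacent transpositions needed to reinsert the bead in order, so this sign gets multiplied by $(-1)^{\ell}$. Consequently the product $(-1)^{\ell(h_1)+\cdots+\ell(h_w)}$ telescopes to the ratio of the reference signs of the abacus of $\la$ and of the abacus of $C_e(\la)$, a quantity depending on $\la$ alone. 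This yields $\sum(-1)^{\ell(h_1)+\cdots+\ell(h_w)}=\pm\binom{w}{|\la_0|,\dots,|\la_{e-1}|}\prod_{i=0}^{e-1}\chi^{\la_i}(1)$ and hence the lemma; the statement is precisely \cite[2.7.33]{JK}, to which one may simply appeal.
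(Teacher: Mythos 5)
Your argument is correct, but it is worth pointing out that the paper itself does not prove this lemma at all: it simply records the statement as a known fact and cites \cite[2.7.33]{JK}, which is exactly the appeal you make in your final sentence. What you have written is a correct reconstruction of the standard proof behind that citation. The three essential points are all handled properly: (a) iterating the Murnaghan--Nakayama rule over the $w$ disjoint $e$-cycles of $\rho$ (legitimate whatever the cycle type of $\gamma$ is, since $\gamma$ lives on the fixed points of $\rho$) reduces everything to counting admissible sequences of $e$-rim-hook removals weighted by $(-1)^{\sum\ell(h_j)}$; (b) Nakayama's theorem forces the sum to be empty when $w>w_e(\la)$ and forces every length-$w_e(\la)$ sequence to terminate at $C_e(\la)$; (c) on the $e$-runner abacus the removal sequences biject with interleaved standard tableaux of the quotient components, giving the factor $\binom{w}{|\la_0|,\ldots,|\la_{e-1}|}\prod_i\chi^{\la_i}(1)$, and the sign is independent of the sequence because it telescopes against the sign of the bead-sorting permutation. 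The last point is the one place where a careless argument could go wrong (one must rule out cancellation among the $(-1)^{\sum\ell(h_j)}$), and you address it explicitly, so there is no gap. In short: your proof is a valid expansion of the reference the paper leans on; for the purposes of this note the bare citation suffices, but nothing in your write-up is incorrect.
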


\section{The proof of Theorem A}

Let $n$ be a natural number and let $P_n$ be a Sylow $p$-subgroup of $\fS_n$. 
Denote by $Q_n=P_n\cap\fA_n$ the unique Sylow $p$-subgroup of $\fA_n$ contained in $P_n$. Clearly $Q_n\trianglelefteq P_n$. We recall that if $n=a_1p^{n_1}+a_2p^{n_2}+\cdots+a_kp^{n_k}$ is the $p$-adic expansion of $n$ then $P_n=(P_{p^{n_1}})^{\times a_1}\times (P_{p^{n_2}})^{\times a_2}\times\cdots\times (P_{p^{n_k}})^{\times a_k}.$

\subsection{Odd primes}\label{sec: 3.1} Let us start by assuming that $p$ is an odd prime. 
In this case we clearly have that $Q_n=P_n$. 
If $n=\sum_{j=1}^ka_jp^{n_j}$ is the $p$-adic expansion of $n$ then we denote by $\omega_n$ a chosen element of $P_n$ obtained as the product of $a_j$ cycles of length $p^{n_j}$, for all $j\in\{1,\ldots, k\}$.
The following Lemma is a consequence of \cite[Lemma 3.11]{GN}. 

\begin{lem}\label{lem:omega_n}
Let $n, p$ and $\omega_n$ be as above. Then
\begin{itemize}
\item[(i)] $\theta(\omega_n)$ is a $p$-th root of unity for every linear character $\theta$ of $P_{n}$.
\item[(ii)] $\delta(\omega_n)=0$ for all  $\delta\in\mathrm{Irr}(P_{n})$ such that $p\ |\ \delta(1)$.
\end{itemize}
\end{lem}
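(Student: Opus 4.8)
The plan is to reduce both statements to the case of a prime-power degree $n = p^m$, where $\omega_n$ is a single $p^m$-cycle, and then use the explicit description of $P_{p^m}$ as an iterated wreath product $C_p \wr C_p \wr \cdots \wr C_p$ together with Clifford theory applied to the base group at each level. First I would record that since $n = \sum_j a_j p^{n_j}$ and $P_n = \prod_j (P_{p^{n_j}})^{\times a_j}$, every linear character $\theta$ of $P_n$ factors as an outer tensor product of linear characters of the individual factors $P_{p^{n_j}}$, and $\omega_n$ is the product of the corresponding prime-power cycles; likewise every $\delta \in \Irr(P_n)$ of degree divisible by $p$ has, in its tensor factorisation, at least one factor of degree divisible by $p$. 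So it suffices to prove: (i$'$) $\theta(\omega_{p^m})$ is a $p$-th root of unity for every linear $\theta \in \Irr(P_{p^m})$, and (ii$'$) $\delta(\omega_{p^m}) = 0$ whenever $p \mid \delta(1)$, where $\omega_{p^m}$ is a $p^m$-cycle in $P_{p^m}$.

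Next I would set up the wreath-product recursion $P_{p^m} = P_{p^{m-1}} \wr C_p = B \rtimes C_p$ with $B = (P_{p^{m-1}})^{\times p}$, and choose $\omega_{p^m}$ to be an element that projects to a generator of the top $C_p$ (so it lies outside $B$); its $p$-th power is conjugate in $B$ to $(\omega_{p^{m-1}}, \ldots, \omega_{p^{m-1}})$, the ``diagonal'' of $p$ copies of a $p^{m-1}$-cycle. For the linear characters, $P_{p^m}/[P_{p^m},P_{p^m}]$ is well known to be elementary abelian of order $p^{m+1}$ --- indeed $P_{p^m}^{\mathrm{ab}} \cong (C_p)^{m}\times C_p$ coming from the $m$ levels --- so $\theta^p = 1$ for every linear $\theta$ and hence $\theta(\omega_{p^m})$ is a $p$-th root of unity, giving (i$'$). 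For (ii$'$), I would argue by induction on $m$: an irreducible $\delta$ of $P_{p^m} = B\rtimes C_p$ with $p \mid \delta(1)$ either is induced from an irreducible of $B$ whose factors are not all $C_p$-conjugate (in which case $\delta$ vanishes on every element outside $B$, in particular on $\omega_{p^m}$), or restricts to $B$ as $\psi^{\times p}$ for a single $\psi \in \Irr(P_{p^{m-1}})$ extended to a character of $P_{p^m}$; in the latter case $p \mid \delta(1)$ forces $p \mid \psi(1)$, and evaluating $\delta$ on $\omega_{p^m}$ one relates $\delta(\omega_{p^m})$ (up to a root of unity and using $\omega_{p^m}^p \sim (\omega_{p^{m-1}},\dots,\omega_{p^{m-1}})$) to $\psi(\omega_{p^{m-1}})$, which is $0$ by induction. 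The base case $m=1$, $P_p = C_p$, is trivial since no irreducible has degree divisible by $p$.

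The main obstacle I anticipate is the bookkeeping in the ``homogeneous'' case of (ii$'$): making precise how an irreducible character $\delta$ of the wreath product that is homogeneous over the base group $B$ is built from $\psi \in \Irr(P_{p^{m-1}})$ (via an extension to $P_{p^{m-1}}\wr C_p$ twisted by a character of $C_p$, by the standard Clifford theory of wreath products), and then extracting the precise value $\delta(\omega_{p^m})$ in terms of $\psi(\omega_{p^{m-1}})$. Since the cited \cite[Lemma 3.11]{GN} presumably already carries out exactly this wreath-product analysis, the cleanest route is simply to quote it: statement (i) is immediate from the description of the abelianization there, and statement (ii) is the vanishing assertion proved there for characters of $p$-power-degree factors. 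I would therefore phrase the proof as a short deduction --- reduce to prime-power degree by the tensor factorisation of $P_n$ and $\omega_n$, then invoke \cite[Lemma 3.11]{GN} for each factor --- rather than reproving the wreath-product computation from scratch.
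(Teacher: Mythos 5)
Your proposal is correct and matches the paper, which likewise gives no independent argument and simply records the lemma as a consequence of \cite[Lemma 3.11]{GN} after the evident reduction to the factors $P_{p^{n_j}}$ of the direct product $P_n$. Your supplementary wreath-product sketch is also sound (the base-group/induced-versus-extended dichotomy and the induction are the standard proof), apart from the harmless slip that $P_{p^m}^{\mathrm{ab}}\cong (C_p)^m$ has order $p^m$, not $p^{m+1}$ --- what matters is only that it has exponent $p$.
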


The following result is a corollary of \cite[Theorem 3.1]{GN}.

\begin{lem}\label{lem:uniquelinear}
Let $\phi\in\mathrm{Irr}(\fA_n)$. 
Then the restriction of $\phi$ to $\fA_n$ has a linear constituent. 
\end{lem}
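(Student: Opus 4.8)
The plan is to derive this from the corresponding statement for symmetric groups: \cite[Theorem 3.1]{GN} implies that $(\chi^{\la})_{P_n}$ has a linear constituent for every $\la\vdash n$. Recall also that here $Q_n=P_n$. Every $\phi\in\mathrm{Irr}(\fA_n)$ is either of the form $\phi^{\la}=(\chi^{\la})_{\fA_n}$ for some $\la\vdash n$ with $\la\neq\la'$, or one of the two constituents $\phi^{\la}_{+},\phi^{\la}_{-}$ of $(\chi^{\la})_{\fA_n}$ for some self-conjugate $\la$. In the first case $\phi_{Q_n}=(\chi^{\la})_{P_n}$ already has a linear constituent, so the real content is the second case, where it suffices to show that each of $(\phi^{\la}_{+})_{Q_n}$ and $(\phi^{\la}_{-})_{Q_n}$ has a linear constituent. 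If $Q_n=1$ (in particular if $n<p$) this is clear, since these restrictions are then multiples of the trivial character, so I may assume $Q_n=P_n\neq 1$.

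The key point is that $N_{\fS_n}(P_n)$ contains an odd permutation. Since $p$ is odd we have $P_n\leq\fA_n$, so the Sylow $p$-subgroups of $\fS_n$ and of $\fA_n$ coincide as subgroups; hence their numbers agree, that is $|\fS_n:N_{\fS_n}(P_n)|=|\fA_n:N_{\fA_n}(P_n)|$, and since $|\fS_n|=2|\fA_n|$ this forces $[\,N_{\fS_n}(P_n):N_{\fS_n}(P_n)\cap\fA_n\,]=2$. Fixing an odd permutation $\sigma\in N_{\fS_n}(P_n)$, conjugation by $\sigma$ is an automorphism of $Q_n$, and since $(\phi^{\la}_{+})^{\sigma}=\phi^{\la}_{-}$ restriction to $Q_n$ gives $\big((\phi^{\la}_{+})_{Q_n}\big)^{\sigma}=(\phi^{\la}_{-})_{Q_n}$. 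An automorphism of $Q_n$ permutes $\mathrm{Irr}(Q_n)$ while preserving degrees, so $(\phi^{\la}_{+})_{Q_n}$ and $(\phi^{\la}_{-})_{Q_n}$ contain exactly the same number of linear constituents, counted with multiplicity. Their sum is $\big((\chi^{\la})_{\fA_n}\big)_{Q_n}=(\chi^{\la})_{P_n}$, which has at least one linear constituent; hence so does each of $(\phi^{\la}_{+})_{Q_n}$ and $(\phi^{\la}_{-})_{Q_n}$, and the lemma follows.

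I expect the only non-formal point to be the existence of an odd permutation in $N_{\fS_n}(P_n)$; the counting argument above handles it cleanly, and alternatively one can build $\sigma$ explicitly, using that inside a single $\fS_p$-factor the map given by multiplication by a primitive root modulo $p$ normalises $\langle(1\,2\,\cdots\,p)\rangle$ and is an odd $(p-1)$-cycle. Everything else is routine Clifford theory for the index-two inclusion $\fA_n\trianglelefteq\fS_n$ together with the quoted symmetric-group result.
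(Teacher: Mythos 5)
Your proof is correct and follows essentially the same route as the paper: invoke \cite[Theorem 3.1]{GN} for $(\chi^\la)_{P_n}$ and, when $\la=\la'$, transfer a linear constituent between $\phi^\la_+$ and $\phi^\la_-$ via an odd permutation normalising $P_n$. The only (immaterial) difference is that you establish the existence of such an odd element by a Sylow-counting argument, whereas the paper exhibits one explicitly as a product of $p^{k-1}$ cycles of length $p-1$.
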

\begin{proof}
Let $\lambda$ be a partition of $n$ such that $\phi$ is an irreducible constituent of $(\chi^\lambda)_{\fA_n}$. Let $\theta$ be a linear constituent of $(\chi^\lambda)_{P_n}$ (this exists by  \cite[Theorem 3.1]{GN}).
If $\lambda\neq \lambda'$ then $\theta$ is a constituent of $\phi_{P_n}$.
Suppose that $\lambda=\lambda'$ and (without loss of generality) that $\phi=\phi^\lambda_+$. Let $\sigma\in N_{\fS_n}(P_n)\smallsetminus \fA_n$. Such a $\sigma$ exists (choose for example an element in $N_{\fS_n}(P_n)$ of cycle type given by the product of $p^{k-1}$ cycles of length $p-1$, where $p^k$ is a $p$-adic digit of $n$). 
Then if we assume that $\theta$ is not a constituent of $\phi_{P_n}$ we deduce that $\theta$ is a constituent of $(\phi^\lambda_-)_{P_n}$. Hence $\theta^\sigma$ is a linear constituent of $((\phi^\lambda_-)_{P_n})^{\sigma}=((\phi^\lambda_-)^{\sigma})_{P_n}=\phi_{P_n}$.
\end{proof}

We are ready to prove Theorem A of the introduction for odd primes. 

\begin{prop}
Let $p$ be an odd prime, let $n$ be a natural number and let $\phi\in\mathrm{Irr}(\fA_n)$ be such that $p$ divides $\phi(1)$. Then the restriction of $\phi$ to $P_n$ has at least $p$ 
distinct linear constituents. 
\end{prop}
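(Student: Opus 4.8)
The plan is to deduce the statement from the corresponding result for symmetric groups established in \cite{GN}. Fix a partition $\lambda\vdash n$ such that $\phi$ is a constituent of $(\chi^\lambda)_{\fA_n}$. Since $p$ is odd, $Q_n=P_n$ and $P_n\le\fA_n$. If $\lambda\ne\lambda'$ then $\chi^\lambda(1)=\phi(1)$ is divisible by $p$, and since $\phi_{P_n}=\bigl((\chi^\lambda)_{\fA_n}\bigr)_{P_n}=(\chi^\lambda)_{P_n}$, the conclusion is exactly the $\fS_n$-statement applied to $\chi^\lambda$. So the interesting case is $\lambda=\lambda'$; write, without loss of generality, $\phi=\phi^\lambda_+$. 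Now $\chi^\lambda(1)=\phi^\lambda_+(1)+\phi^\lambda_-(1)=2\phi(1)$ is again divisible by $p$ (here we use that $p$ is odd), so by \cite{GN} the character $(\chi^\lambda)_{P_n}=(\phi^\lambda_+)_{P_n}+(\phi^\lambda_-)_{P_n}$ has at least $p$ distinct linear constituents.

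It therefore suffices to prove that $(\phi^\lambda_+)_{P_n}=(\phi^\lambda_-)_{P_n}$: once this is known, each of these restrictions equals $\tfrac12(\chi^\lambda)_{P_n}$ and hence inherits the $\ge p$ distinct linear constituents of $(\chi^\lambda)_{P_n}$. To establish the equality I would use the classical description of the difference $\phi^\lambda_+-\phi^\lambda_-$: as a class function on $\fA_n$ it vanishes identically except on the single conjugacy class of permutations of cycle type $D_\lambda:=\bigl(h_{1,1}(\lambda),h_{2,2}(\lambda),\dots,h_{d,d}(\lambda)\bigr)$, where the $h_{i,i}(\lambda)$ are the (distinct, odd) principal hook lengths of $\lambda$ and $d$ is the size of its Durfee square. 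If $P_n$ contains no element of cycle type $D_\lambda$ then $(\phi^\lambda_+-\phi^\lambda_-)_{P_n}=0$ and we are done. Since every element of $P_n$ is a $p$-element, the only way $P_n$ can contain such an element is if every $h_{i,i}(\lambda)$ is a power of $p$; so it remains to rule this out.

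The crux of the argument — and the step I expect to be the main obstacle — is the combinatorial claim: \emph{if $\lambda=\lambda'$ and all the principal hook lengths $h_{i,i}(\lambda)$ are powers of $p$, then $p\nmid\chi^\lambda(1)$}, which contradicts $p\mid\chi^\lambda(1)$ and closes the proof. I would prove this by induction on $d$. Peeling the first row and first column off $\lambda$ produces a self-conjugate partition $\widetilde\lambda$ whose principal hooks are $h_{2,2}(\lambda)>\dots>h_{d,d}(\lambda)$ (again powers of $p$), and leaves behind the first-row hook length $h_{1,1}(\lambda)$ once and each of the other first-row hook lengths twice. For a self-conjugate partition with principal hooks $a_1>\dots>a_d$ these other first-row hook lengths are precisely $\bigl(\{1,\dots,\tfrac{a_1-1}{2}\}\setminus\{\tfrac{a_1-a_i}{2}:2\le i\le d\}\bigr)\cup\{\tfrac{a_1+a_i}{2}:2\le i\le d\}$; when every $a_i=p^{b_i}$, the removed and added parts have matching $p$-power divisibility (both $\tfrac{a_1-a_i}{2}$ and $\tfrac{a_1+a_i}{2}$ have $p$-adic valuation $b_i$, since $b_i<b_1$), so the number of cells of $\lambda$ with hook length divisible by $p^k$ equals $p^{\,b_1-k}$ for $k\le b_1$ plus the corresponding count for $\widetilde\lambda$. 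Using the standard identities $\#\{c:p^k\mid h_c(\lambda)\}=w_{p^k}(\lambda)$ and $v_p(\chi^\lambda(1))=\sum_{k\ge1}\bigl(\lfloor n/p^k\rfloor-w_{p^k}(\lambda)\bigr)$ for the $p$-adic valuation $v_p$, the induction gives $w_{p^k}(\lambda)=\lfloor n/p^k\rfloor$ for all $k$, hence $\chi^\lambda(1)$ is coprime to $p$. The genuinely nontrivial input is the exact description of the first-row hook lengths of a self-conjugate partition in terms of its principal hooks (equivalently, the symmetry of its first-column hook set); I would either prove it by a short abacus computation or cite it from the literature on self-conjugate partitions. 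The rest is bookkeeping.
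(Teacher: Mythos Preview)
Your argument is correct, but it diverges from the paper's route once you reach the self-conjugate case. The paper does not attempt to show $(\phi^\lambda_+)_{P_n}=(\phi^\lambda_-)_{P_n}$; instead it evaluates $\phi$ at the single element $\omega_n\in P_n$ (the product of $a_j$ cycles of length $p^j$ from the $p$-adic expansion of $n$). Macdonald's criterion gives $\chi^\lambda(\omega_n)=0$ whenever $p\mid\chi^\lambda(1)$, and then \cite[2.5.13]{JK} yields $\phi(\omega_n)=\tfrac12\chi^\lambda(\omega_n)=0$ (the split class being excluded automatically, since on it $\chi^\lambda$ takes the value $\pm1\neq0$). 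Lemma~\ref{lem:omega_n} then forces $0=\phi(\omega_n)=\sum_{i=1}^\ell c_i\zeta_i$ with $\zeta_i$ $p$-th roots of unity and $c_i\in\mathbb{N}$, which is impossible for $\ell<p$.

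Your approach buys a pleasant side result, namely that the two halves of $(\chi^\lambda)_{\fA_n}$ restrict identically to $P_n$ whenever $p\mid\chi^\lambda(1)$. On the other hand, the ``crux'' you identify can be handled much more cheaply than by the hook-length bookkeeping you outline: if the diagonal hooks are $p^{b_1}>\cdots>p^{b_d}$ then $n=\sum_i p^{b_i}$ is already the $p$-adic expansion and $\omega_n$ has exactly this cycle type; stripping the principal rim hooks successively (each the unique hook of its size at that stage) gives $\chi^\lambda(\omega_n)=\pm1\neq0$, so $p\nmid\chi^\lambda(1)$ by the Macdonald characterisation. That one-line observation collapses your detour back into the paper's argument; your abacus/first-row-hook computation is correct but unnecessary here.
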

\begin{proof}
Let $\lambda$ be a partition of $n$ such that  $\phi$ is an irreducible constituent of $(\chi^\lambda)_{\fA_n}$. If $\lambda\neq\lambda'$ then the statement follows from \cite[Theorem A]{GN}.
Assume now that  $\lambda=\lambda'$ and (without loss of generality) $\phi=\phi^\lambda_+$. Let $n=\sum_{j=0}^ka_jp^j$ be the $p$-adic expansion of $n$ and let 
$\omega_n\in P_{n}$ be the product of $a_j$ $p^j$-cycles for $j\in\{0,1,\ldots, k\}$. 
Since $p$ divides $\phi(1)$ we have that $p$ divides $\chi^\lambda(1)$ and therefore that $\chi^\lambda(\omega_n)=0$. This follows from the description of irreducible characters of symmetric groups whose degree is not divisible by $p$ (see for instance \cite{Mac} or \cite[Proposition 6.4]{OlssonBook}). Moreover, by \cite[Theorem 2.5.13]{JK} we obtain that $\phi(\omega_n)=\frac{\chi^\lambda(g)}{2}=0$. Suppose for a contradiction that $\phi_{P_n}$ has $\ell$ distinct linear constituents $\theta_1, \ldots, \theta_\ell$, for some $1\leq \ell<p$. From Lemma \ref{lem:omega_n} we deduce that 
$$0=\phi(\omega_n)=c_1\theta_1(\omega_n)+\cdots c_\ell\theta_\ell(\omega_n)=c_1\zeta_1+\cdots c_\ell\zeta_\ell,$$
for some $p$-th roots of unity $\zeta_1, \ldots, \zeta_\ell$ and some positive natural numbers $c_1, \ldots, c_\ell$. 
This is a contradiction, since no $\mathbb{N}$-linear combination of $\zeta_1, \ldots, \zeta_\ell$ can equal $0$.
\end{proof}

\subsection{The prime $2$}\label{sec: 3.2}
To deal with the remaining $p=2$ case, we need to fix some more precise notation. 
Let $n\in\mathbb{N}$, for $k\in\{0,1,\ldots, n-1\}$ we let $g_k$ be the element of $\fS_{2^n}$ defined by $$g_k=\prod_{i=1}^{2^k}(i, i+2^k).$$
It is not too difficult to see that $P_{2^n}:=\left\langle g_0,\ldots, g_{n-1}\right\rangle$ is a Sylow $2$-subgroup of $\fS_{2^n}$. Moreover the element $\gamma_n\in P_{2^n}$ defined by $\gamma_n=g_1g_2\cdots g_{n-1}$ has cycle type $(2^{n-1}, 2^{n-1})$. In particular $\gamma_n\in\fA_{2^n}\cap P_{2^n}=Q_{2^n}$. 
Similarly, the element $\omega_n\in P_{2^n}$ defined by $\omega_n=g_0\gamma_n$ is a $2^n$-cycle.  

\begin{lem}\label{lem:2-root}
Let $\gamma_n$ and $\omega_n$ be the elements defined above and let $g\in\{\gamma_n,\omega_n\}$. Then:  
\begin{itemize}
\item[(i)] $\theta(g)\in\{-1,+1\}$ for every linear character $\theta$ of $P_{2^n}$.
\item[(ii)] $\delta(g)=0$ for all $\delta\in\mathrm{Irr}(P_{2^n})$ such that $\delta(1)$ is even.
\end{itemize}
\end{lem}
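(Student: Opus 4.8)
The plan is to prove both statements by exploiting the recursive product structure of the Sylow subgroup together with the fact that both $\gamma_n$ and $\omega_n$ have been built as products of the generators $g_k$ that act on disjoint blocks. First I would record the key structural fact: under the iterated wreath product description $P_{2^n}\cong P_{2^{n-1}}\wr C_2$, the element $\gamma_n=g_1\cdots g_{n-1}$ sits inside the base group $P_{2^{n-1}}\times P_{2^{n-1}}$ as the diagonal-type element $(\gamma_{n-1}',\gamma_{n-1}'')$ (a conjugate of $\gamma_{n-1}$ placed on each of the two halves), while $\omega_n=g_0\gamma_n$ has $g_0$ as its ``top'' coordinate, so it is a genuine element of the form (swap)$\times$(base part) that cyclically interchanges the two halves. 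This is exactly the kind of element to which one can apply the standard Clifford-theoretic analysis of $\Irr(A\wr C_2)$.

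Next I would prove part (ii), that $\delta(g)=0$ whenever $2\mid\delta(1)$, by induction on $n$. Every $\delta\in\Irr(P_{2^{n-1}}\wr C_2)$ of even degree is of one of two shapes: either it is induced from a character $\alpha\times\beta$ of the base group with $\alpha\not\cong\beta$ (so $\delta=(\alpha\times\beta)^{P_{2^n}}$ has degree $2\alpha(1)\beta(1)$), or it is one of the two extensions (or the sum of the two associated characters) coming from a character $\alpha\times\alpha$ of the base group with $2\mid\alpha(1)$. In the first case, $\delta$ vanishes on every element of the base group that does not have both coordinates conjugate in $P_{2^{n-1}}$ — in particular it vanishes on $\gamma_n=(\gamma_{n-1}',\gamma_{n-1}'')$ unless $\gamma_{n-1}'\sim\gamma_{n-1}''$; but even then the induced-character formula gives $\delta(\gamma_n)=\alpha(\gamma_{n-1})\beta(\gamma_{n-1})+\beta(\gamma_{n-1})\alpha(\gamma_{n-1})$, and for $\omega_n$ (which has a nontrivial top coordinate) the induced character formula shows $\delta(\omega_n)=0$ automatically because no conjugate of $\omega_n$ lies in the base group. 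In the second case, the value of an extension of $\alpha\times\alpha$ at a base-group element is $\alpha\times\alpha$ evaluated there, which is $\alpha(\gamma_{n-1}')\alpha(\gamma_{n-1}'')$, and this is $0$ by the inductive hypothesis since $2\mid\alpha(1)$; at $\omega_n$ one uses the Clifford formula for the value of an extension on the non-base coset, which reduces to a character value of $\alpha$ on a $P_{2^{n-1}}$-element times $\pm1$, and again $2\mid\alpha(1)$ forces $0$ after unwinding. The base case $n=1$ (where $P_2\cong C_2$ has no even-degree irreducibles) is vacuous.

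For part (i), that linear characters take values $\pm1$ on $g$, the cleanest route is to note that $P_{2^n}/[P_{2^n},P_{2^n}]$ is an elementary abelian $2$-group — indeed $P_{2^n}^{\mathrm{ab}}\cong C_2^{\,n}$, with the classes of $g_0,\dots,g_{n-1}$ forming a basis — so every linear character $\theta$ satisfies $\theta^2=1$ and hence $\theta(g)\in\{\pm1\}$ for \emph{any} $g$, in particular for $g\in\{\gamma_n,\omega_n\}$. (Alternatively one sees directly that $\gamma_n$ and $\omega_n$ are products of the generators $g_k$, each of order $2$, so their images in the abelianization have order dividing $2$.) This part is essentially immediate once the abelianization is identified, so it would go first logically even though it is the easier of the two.

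The main obstacle will be part (ii), and specifically the bookkeeping in the Clifford theory of $A\wr C_2$ for the element $\omega_n$ which swaps the two halves: one must use the formula for the value of a $C_2$-extension of $\alpha\times\alpha$ on the ``odd'' coset, expressing it in terms of $\alpha$ evaluated on the element of $P_{2^{n-1}}$ obtained by reading $\omega_n$ around the swap. I expect this to reduce, after the dust settles, to a value $\pm\alpha(\omega_{n-1})$ (or $\pm\alpha(\gamma_{n-1})$), so that the inductive hypothesis applies — but checking that the relevant ``folded'' element really is (conjugate to) $\omega_{n-1}$ or $\gamma_{n-1}$, and handling the induced-character case where the two base coordinates may or may not be $P_{2^{n-1}}$-conjugate, is where the care is needed. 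A convenient alternative that sidesteps the $\omega_n$ subtlety: observe that $\omega_n$ is a $2^n$-cycle in $\fS_{2^n}$, hence a regular element of the abelian subgroup $\langle\omega_n\rangle$, and combine this with Lemma~\ref{lem:farahat}-type vanishing pulled back through $P_{2^n}\le\fS_{2^n}$; but staying inside $P_{2^n}$ via the wreath-product induction is more self-contained and is the approach I would ultimately write up.
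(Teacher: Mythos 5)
Your argument for part (i) is correct and in fact cleaner than an induction: $P_{2^n}^{\mathrm{ab}}\cong C_2^{\,n}$ is elementary abelian, so every linear character squares to the trivial one and takes values in $\{\pm1\}$ on all of $P_{2^n}$. The gap is in part (ii), and it originates in your structural claim that $\gamma_n=g_1\cdots g_{n-1}$ sits inside the base group $P_{2^{n-1}}\times P_{2^{n-1}}$ as a diagonal-type element. This is false: the factor $g_{n-1}=\prod_{i=1}^{2^{n-1}}(i,i+2^{n-1})$ is exactly the permutation interchanging the two blocks $\{1,\dots,2^{n-1}\}$ and $\{2^{n-1}+1,\dots,2^{n}\}$, while $g_1,\dots,g_{n-2}$ preserve them; hence $\gamma_n=(\gamma_{n-1},1;\sigma)$ lies in the non-identity coset of the base group, just like $\omega_n$. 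The error is load-bearing. In the induced case $\delta=(\alpha\times\beta)^{P_{2^n}}$ your formula gives $\delta(\gamma_n)=2\alpha(\gamma_{n-1})\beta(\gamma_{n-1})$, which you never show to be zero and which is not zero in general; in the extension case you evaluate $\phi\times\phi$ at a base element, which is the wrong formula for $\gamma_n$. Worse, if $\gamma_n$ really were the diagonal element you describe, part (ii) would be false: in $P_4\cong D_8$ the element $(1,2)(3,4)$ (a transposition on each half) is central and the degree-$2$ irreducible character takes the value $-2$ on it, whereas the actual $\gamma_2=(1,3)(2,4)=g_1$ is the block swap, on which that character vanishes.

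Once the structure is corrected, the proof is exactly the mechanism you already describe for $\omega_n$, applied to both elements. Since $\gamma_n$ and $\omega_n$ lie outside the normal base group, every character induced from $\phi_1\times\phi_2$ with $\phi_1\neq\phi_2$ vanishes on them with no computation. For an extension $\delta$ of $\phi\times\phi$ with $\phi$ of even degree, the wreath-product character formula on the swap coset ``folds'' the two base coordinates into a single element of $P_{2^{n-1}}$, namely $\gamma_{n-1}\cdot 1=\gamma_{n-1}$ (resp.\ $\omega_{n-1}$), so that $\delta(g)=\pm\phi(\gamma_{n-1})$ or $\pm\phi(\omega_{n-1})$, which is $0$ by induction. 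This is essentially the paper's argument (the paper runs this induction for $\gamma_n$ via the character formula for wreath products and quotes an external lemma for $\omega_n$).
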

\begin{proof}
Let $g=\gamma_n$. It is well known that $P_{2^n}=P_{2^{n-1}}\wr C_2=(P_{2^{n-1}}\times P_{2^{n-1}})\rtimes C_2$. Let $\sigma$ be a generator of the top group $C_2$, acting on the base group by swapping the two direct factors. From this point of view we observe that $\gamma_n=(\gamma_{n-1}, 1; \sigma)$. To prove (i) we now proceed by induction on $n$. If $n=1$, the result is obviously true. Hence assume that $n\geq 2$ and let $\theta$ be a linear character of $P_{2^n}$. By Gallagher's Corollary (see \cite[6.17]{I}) we know that $\theta$ is the extension by an irreducible character $\psi$ of $C_2$ of an irreducible character $\phi\times\phi\in\mathrm{Irr}(P_{2^{n-1}}\times P_{2^{n-1}})$, for some linear character $\phi$ of $P_{2^{n-1}}$. Using \cite[Lemma 4.3.9]{JK}, it follows that $\theta(\gamma_n)=\phi(\gamma_{n-1})\psi(\sigma)$. Using the inductive hypothesis we conclude that $\phi(\gamma_{n-1})\in\{-1,+1\}$ and therefore that $\theta(\gamma_n)=\pm 1$.

The proof of case (ii) is again done by induction on $n$. We observe that if $\delta$ is an irreducible character of $P_{2^n}$ of even degree then either $\delta=(\phi_1\times\phi_2)^{P_{2^n}}$, for some $\phi_1,\phi_2\in\mathrm{Irr}(P_{2^{n-1}}\times P_{2^{n-1}})$ such that $\phi_1\neq\phi_2$; 
or otherwise $\delta$  is the extension by an irreducible character $\psi$ of $C_2$ of an irreducible character $\phi\times\phi\in\mathrm{Irr}(P_{2^{n-1}}\times P_{2^{n-1}})$, for some even-degree character $\phi$ of $P_{2^{n-1}}$. 
In the first case we have that $\delta(\gamma_n)=0$ because $\gamma_n\notin P_{2^{n-1}}\times P_{2^{n-1}}$. In the second case we can write again $\delta(\gamma_n)=\phi(\gamma_{n-1})\psi(\sigma)=0$, by induction. 
If $g=\omega_n$ then the result follows from \cite[Lemma 3.11]{GN}.
\end{proof}

It is useful to recall here the characterization of those partitions of a natural number n labelling irreducible characters of odd degree of symmetric groups. This was first observed in \cite{Mac}. As usual we adopt the symbol $\mathrm{Irr}_{2'}(G)$ to denote the set of irreducible characters of the finite group $G$ whose degree is odd. Moreover, we let $\mathrm{Lin}(G)$ be the notation for the set of linear characters of $G$.

\begin{thm}\label{thm:oddMac}
Let $n$ be a natural number with binary expansion $n=2^{n_1}+2^{n_2}+\cdots+2^{n_t}$, for some $n_1>\cdots >n_t\geq 0$ and let $\lambda\in\mathcal{P}(n)$. Then, $\chi^{\lambda}\in\mathrm{Irr}_{2'}(\fS_{n})$ if and only if there exists a unique removable $2^{n_1}$-hook $h_1$ in $\lambda$ and  $\chi^{(\lambda - h_1)}\in\mathrm{Irr}_{2'}(\fS_{n-2^{n_1}})$.
\end{thm}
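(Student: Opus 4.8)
The plan is to deduce the recursive criterion from a closed formula for the $2$-adic valuation $\nu_2(\chi^\lambda(1))$ of the degree. First I would combine the hook length formula $\chi^\lambda(1)=n!\,/\prod_{(i,j)\in[\lambda]}h_{i,j}(\lambda)$, Legendre's formula $\nu_2(n!)=\sum_{i\ge1}\lfloor n/2^i\rfloor$, and the identity $\nu_2(h)=\#\{i\ge1:2^i\mid h\}$ to get
$$\nu_2(\chi^\lambda(1))=\sum_{i\ge1}\Big(\big\lfloor n/2^i\big\rfloor-|\mathcal{H}_{2^i}(\lambda)|\Big).$$
Then I would invoke the standard abacus fact (see \cite{OlssonBook}) that, for any $e$, the hook lengths of $\lambda$ divisible by $e$ are, after division by $e$, exactly the hook lengths of the $e$-quotient $Q_e(\lambda)$; in particular $|\mathcal{H}_e(\lambda)|$ is the number of nodes of $Q_e(\lambda)$, i.e.\ $|\mathcal{H}_e(\lambda)|=w_e(\lambda)$. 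Since moreover $w_e(\lambda)=(n-|C_e(\lambda)|)/e\le\lfloor n/e\rfloor$, every summand above is non-negative, and so we obtain the key reduction: \emph{for any partition $\mu$ of any $N$, $\chi^\mu\in\mathrm{Irr}_{2'}(\fS_N)$ if and only if $w_{2^i}(\mu)=\lfloor N/2^i\rfloor$ for all $i\ge1$.}

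Next I would record the behaviour of $2$-power weights under removal of a $2^{n_1}$-hook. Write $e=2^{n_1}$, so that $e\le n<2e$, and put $m=n-e$. For every $i\le n_1$ we have $2^i\mid e$, and on a $2^i$-runner abacus the removal of a rim $e$-hook is the slide of a single bead by $e/2^i$ positions within one runner; hence it fixes the $2^i$-core and, in the case $i=n_1$, identifies the removable $e$-hooks of a partition with the removable nodes of the components of its $e$-quotient. Consequently, for every removable $e$-hook $h$ of $\lambda$,
$$w_{2^i}(\lambda-h)=w_{2^i}(\lambda)-e/2^i\qquad(1\le i\le n_1).$$

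With these two inputs the theorem reduces to bookkeeping with the floor identities $\lfloor n/2^{n_1}\rfloor=1$, $\lfloor n/2^i\rfloor=0$ for $i>n_1$, and $\lfloor n/2^i\rfloor=2^{n_1-i}+\lfloor m/2^i\rfloor$ for $1\le i\le n_1$ (valid since $2^i\mid 2^{n_1}$). For the forward implication, if $\chi^\lambda$ has odd degree then the reduction of the first paragraph forces $w_e(\lambda)=1$, so exactly one component of $Q_e(\lambda)$ is $(1)$ and the rest are empty; hence $\lambda$ has a unique removable $e$-hook $h_1$, and $\mu:=\lambda-h_1\vdash m$ satisfies $w_{2^i}(\mu)=\lfloor n/2^i\rfloor-2^{n_1-i}=\lfloor m/2^i\rfloor$ for $i\le n_1$ and $w_{2^i}(\mu)=0=\lfloor m/2^i\rfloor$ for $i>n_1$ (as $2^i>m$), so $\chi^{\lambda-h_1}\in\mathrm{Irr}_{2'}(\fS_m)$. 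For the converse, if $\lambda$ has a removable $e$-hook $h_1$ with $\chi^{\lambda-h_1}$ of odd degree, then $w_{2^i}(\lambda-h_1)=\lfloor m/2^i\rfloor$ for all $i$, whence $w_{2^i}(\lambda)=\lfloor m/2^i\rfloor+2^{n_1-i}=\lfloor n/2^i\rfloor$ for $i\le n_1$ and $w_{2^i}(\lambda)=0=\lfloor n/2^i\rfloor$ for $i>n_1$, so $\chi^\lambda$ has odd degree.

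The only non-formal ingredients are the two combinatorial facts quoted from the abacus — that $|\mathcal{H}_e(\lambda)|=w_e(\lambda)$, and that removing a rim $2^{n_1}$-hook preserves all $2^i$-cores for $i\le n_1$ — and I expect establishing or precisely citing these (rather than the arithmetic) to be the part of the argument that requires most care; everything else is manipulation of the floor identities and of the equivalence in the first paragraph.
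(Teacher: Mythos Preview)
The paper does not give its own proof of this theorem: it is stated as a known result and attributed to Macdonald \cite{Mac} (with \cite{OlssonBook} cited elsewhere for the surrounding combinatorics). There is therefore nothing in the paper to compare your argument against.

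That said, your argument is correct and is essentially the standard proof underlying Macdonald's result. The identity
\[
\nu_2(\chi^\lambda(1))=\sum_{i\ge1}\Big(\lfloor n/2^i\rfloor-|\mathcal{H}_{2^i}(\lambda)|\Big)
\]
together with $|\mathcal{H}_{e}(\lambda)|=w_e(\lambda)\le\lfloor n/e\rfloor$ gives exactly the characterisation ``$\chi^\lambda$ has odd degree iff $w_{2^i}(\lambda)=\lfloor n/2^i\rfloor$ for every $i$'', and your bead-sliding observation that removing a rim $2^{n_1}$-hook preserves $C_{2^i}(\lambda)$ for $i\le n_1$ (hence lowers $w_{2^i}$ by $2^{n_1-i}$) is precisely what is needed to pass from this to the recursive statement. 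Two small remarks: (a) in the converse direction you only use the \emph{existence} of a removable $2^{n_1}$-hook with odd-degree remainder, which is fine since uniqueness then follows a posteriori from the forward direction; (b) for $i>n_1$ you should say explicitly that $w_{2^i}(\lambda)\le\lfloor n/2^i\rfloor=0$ forces $w_{2^i}(\lambda)=0$, rather than asserting it directly. The two abacus facts you flag as the non-formal ingredients are indeed standard and can be cited from \cite[Chapter~1 and Proposition~3.6]{OlssonBook}.
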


For any $n\in\mathbb{N}$ we denote by $\mathcal{L}(n)$ the subset of $\mathcal{P}(n)$ consisting of \textit{hook partitions} (i.e. partitions of the form $(n-x, 1^x)$ for $0\leq x\leq n-1$). An useful consequence of Theorem \ref{thm:oddMac} is that $\chi^\lambda\in\mathrm{Irr}_{2'}(\fS_{2^n})$ if and only if $\lambda\in\mathcal{L}(n)$. 

In this article we will need a little bit of control on the maximal power of $2$ dividing the degrees of irreducible characters. We say that $\nu_2(n)=a$ if $n=2^am$, for some odd $m\in\mathbb{N}$.
The theory of $2$-core and $2$-quotient towers, introduced in \cite{Mac} and fully developed and beautifully explained in \cite{OlssonBook}, implies the following fact. 

\begin{lem}\label{lem: towers8}
Let $n$ be natural number and let $\lambda\in\mathcal{P}(2^n)$. Then: 
\begin{itemize}
\item[(i)] $\nu_2(\chi^\lambda(1))=1$ if and only if $\mathcal{H}_{2^n}(\lambda)=\emptyset$ and $|\mathcal{H}_{2^{n-1}}(\lambda)|=2$.
\item[(ii)] $\nu_2(\chi^\lambda(1))\geq 2$ if and only if $|\mathcal{H}_{2^{n-1}}(\lambda)|\leq 1$.
\end{itemize}
\end{lem}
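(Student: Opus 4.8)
The plan is to convert the statement into elementary arithmetic via the theory of the $2$-core tower of $\lambda$, introduced by Macdonald \cite{Mac} and developed in \cite{OlssonBook}. For a partition $\lambda$ of a natural number $m$, I will write $c_i=c_i(\lambda)$ for the sum of the sizes of the $2$-cores occurring at the $i$-th level of the $2$-core tower of $\lambda$ (the $i$-th level consisting of $2^i$ partitions). I use three standard facts. The first is the defining relation $\sum_{i\ge 0}c_i(\lambda)\,2^i=m$. The second is that dividing by $2^k$ sets up a bijection between the nodes of $[\lambda]$ with hook length divisible by $2^k$ and the nodes of the $2^k$-quotient of $\lambda$, so that $|\mathcal{H}_{2^k}(\lambda)|=w_{2^k}(\lambda)=\sum_{i\ge k}c_i(\lambda)\,2^{i-k}$ for every $k\ge 0$. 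The third, which I would either quote from \cite{OlssonBook} or recover directly from the hook length formula, Legendre's formula $\nu_2(m!)=m-s_2(m)$ (with $s_2$ the binary digit sum) and the second fact summed over $k\ge 1$, is the degree formula
\[
\nu_2(\chi^\lambda(1))=\Big(\sum_{i\ge 0}c_i(\lambda)\Big)-s_2(m).
\]

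I would then specialise to $m=2^n$, where $s_2(m)=1$. The relation $\sum_i c_i\,2^i=2^n$ is extremely rigid: it forces $c_i=0$ for $i>n$ and $c_n\in\{0,1\}$, and hence $\nu_2(\chi^\lambda(1))=\big(\sum_i c_i\big)-1$, $|\mathcal{H}_{2^n}(\lambda)|=c_n$, and $|\mathcal{H}_{2^{n-1}}(\lambda)|=c_{n-1}+2c_n$. For part (i): $\nu_2(\chi^\lambda(1))=1$ is equivalent to $\sum_i c_i=2$, and the only representation of $2^n$ as $\sum_i c_i\,2^i$ with digit sum $2$ is $c_{n-1}=2$ and all other $c_i=0$; reading off the two cardinalities gives precisely $\mathcal{H}_{2^n}(\lambda)=\emptyset$ and $|\mathcal{H}_{2^{n-1}}(\lambda)|=2$, and conversely these two conditions force $c_n=0$ and $c_{n-1}=2$, hence $c_i=0$ otherwise, hence $\nu_2(\chi^\lambda(1))=1$. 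For part (ii): $|\mathcal{H}_{2^{n-1}}(\lambda)|=c_{n-1}+2c_n\le 1$ is equivalent to $c_n=0$ and $c_{n-1}\le 1$; granting this, the remaining mass ($2^{n-1}$ if $c_{n-1}=1$, or $2^n$ if $c_{n-1}=0$) is supported on positions $\le n-2$, and since each $2^i$ there is at most $2^{n-2}$ one gets $\sum_i c_i\ge 3$, i.e. $\nu_2(\chi^\lambda(1))\ge 2$; conversely, if $\nu_2(\chi^\lambda(1))\ge 2$ then $\sum_i c_i\ge 3$, which excludes both $c_n=1$ (digit sum $1$) and $c_{n-1}\ge 2$ (digit sum $2$), whence $|\mathcal{H}_{2^{n-1}}(\lambda)|=c_{n-1}\le 1$.

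The only genuine subtlety --- and the step I would treat most carefully --- is citing the background on the $2$-core tower in exactly the form used above, in particular the identity $|\mathcal{H}_{2^k}(\lambda)|=\sum_{i\ge k}c_i(\lambda)\,2^{i-k}$ and the valuation formula for $\chi^\lambda(1)$; once these are in place, the lemma is merely book-keeping with representations of a power of $2$. As a check on the constants, for $n=2$ the unique even-degree character of $\fS_4$ corresponds to $(2,2)$, which has $\mathcal{H}_4=\emptyset$, $|\mathcal{H}_2|=2$ and $\nu_2(2)=1$; and for $n=3$ the partition $(2,2,2,1,1)\vdash 8$ has $|\mathcal{H}_4|=1$ and $\nu_2(\chi^{(2,2,2,1,1)}(1))=\nu_2(28)=2$, consistent with~(ii).
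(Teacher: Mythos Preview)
Your proof is correct and follows exactly the route the paper gestures at: the lemma is stated in the paper without proof, merely attributed to the theory of $2$-core and $2$-quotient towers from \cite{Mac} and \cite{OlssonBook}, and you have carefully unpacked precisely that machinery (the identities $\sum_i c_i 2^i = 2^n$, $|\mathcal{H}_{2^k}(\lambda)| = \sum_{i\ge k} c_i 2^{i-k}$, and $\nu_2(\chi^\lambda(1)) = \sum_i c_i - s_2(2^n)$) to reduce both statements to elementary arithmetic with representations of a power of~$2$. There is nothing to add.
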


The Murnaghan-Nakayama formula together with Theorem \ref{thm:oddMac} imply the following fact. 

\begin{cor}\label{cor:g lambda}
Let $n$ be a natural number with binary expansion $n=2^{n_1}+\cdots+2^{n_t}+a_02^0,$ for some $t\geq 2$,  $n_1>\cdots >n_t> 0$ and $a_0\in\{0,1\}$.
Let $\lambda\in\mathcal{P}(n)$ be such that $\chi^\lambda(1)$ is even. 
Then there exists $g_\lambda\in Q_n$ such that the following hold. 

\medskip

\begin{itemize}
\item[(i)] $\theta(g_\lambda)\in\{-1,+1\}$ for every linear character $\theta$ of $P_n$.
\item[(ii)] $\delta(g_\lambda)=0$ for all  $\delta\in\mathrm{Irr}(P_n)$ such that $\delta(1)$ is even.
\item[(iii)] $\chi^\lambda(g_\lambda)=0$.
\end{itemize}
\end{cor}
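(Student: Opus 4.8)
The plan is to look for $g_\lambda$ among the elements of $P_n=P_{2^{n_1}}\times\cdots\times P_{2^{n_t}}$ (the factor $P_1^{a_0}$, when present, being trivial) of the form $h=h_1\cdots h_t$ with $h_j\in\{\gamma_{n_j},\omega_{n_j}\}$. Since every linear character and every irreducible character of a direct product is an outer tensor product of such characters of the factors, Lemma \ref{lem:2-root} applied factor by factor shows that \emph{any} such $h$ automatically satisfies (i) and (ii): for linear $\theta=\theta_1\times\cdots\times\theta_t$ one has $\theta(h)=\prod_j\theta_j(h_j)\in\{\pm1\}$, and if $\delta=\delta_1\times\cdots\times\delta_t$ has even degree then some $\delta_{j_0}(1)$ is even, so $\delta_{j_0}(h_{j_0})=0$ and $\delta(h)=0$. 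Thus the whole problem is to choose the $h_j$ so that, in addition, $\chi^\lambda(h)=0$ and $h$ is an even permutation. Since $\omega_{n_j}$ is a $2^{n_j}$-cycle (odd) and $\gamma_{n_j}$ is a product of two $2^{n_j-1}$-cycles (even), $h$ lies in $Q_n$ exactly when an even number of the $h_j$ equal $\omega_{n_j}$.

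First I would build a reduction chain. Since $2^{n_1}\le n<2^{n_1+1}$ one has $w_{2^{n_1}}(\lambda)\le1$, and the analogous bound holds at every later stage. Set $\mu^{(0)}=\lambda$; whenever $w_{2^{n_j}}(\mu^{(j-1)})=1$, the partition $\mu^{(j-1)}$ has a unique removable $2^{n_j}$-rim hook, and by the contrapositive of Theorem \ref{thm:oddMac} (with $2^{n_j}$ in the role of the top binary digit of $|\mu^{(j-1)}|$) removing it produces $\mu^{(j)}=C_{2^{n_j}}(\mu^{(j-1)})$, which again labels a character of even degree. Let $r\le t$ be the first index with $w_{2^{n_r}}(\mu^{(r-1)})=0$; such an $r\le t$ exists, since if $w_{2^{n_t}}(\mu^{(t-1)})=1$ then $\mu^{(t)}$ would be a partition of $a_0\le1$, forcing $\chi^{\mu^{(t)}}(1)$ odd. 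Applying Lemma \ref{lem:farahat} $r-1$ times with weight $1$ gives, for \emph{any} permutation $g'$ of the remaining $n-(2^{n_1}+\cdots+2^{n_{r-1}})$ points, $\chi^\lambda(\omega_{n_1}\cdots\omega_{n_{r-1}}\,g')=\pm\chi^{\mu^{(r-1)}}(g')$; and since $w_{2^{n_r}}(\mu^{(r-1)})=0$, one more application of Lemma \ref{lem:farahat} shows this is $0$ whenever the longest cycle of $g'$ has length $2^{n_r}$.

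Consequently I would take $g_\lambda=\omega_{n_1}\cdots\omega_{n_r}\cdot e_{r+1}\cdots e_t$ with each $e_j\in\{\gamma_{n_j},\omega_{n_j}\}$: then $\chi^\lambda(g_\lambda)=0$ for any choice of the $e_j$ (because the longest cycle of $\omega_{n_r}e_{r+1}\cdots e_t$ has length $2^{n_r}$), so only the parity must be arranged. If $r$ is even, take all $e_j=\gamma_{n_j}$; if $r$ is odd and $r<t$, take $e_{r+1}=\omega_{n_{r+1}}$ and the remaining $e_j=\gamma_{n_j}$. In both cases the number of $\omega$-factors is even, so $g_\lambda\in Q_n$, and (i), (ii) and (iii) all hold; this also settles $r=t$ with $t$ even. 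The only remaining case is $r=t$ with $t$ odd, and this is exactly where the hypothesis $t\ge2$ is not enough on its own and where new combinatorial input is needed.

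In that case there is no free block left to absorb the parity, so I would no longer use $\omega_{n_t}$ in the last block. If $\chi^{\mu^{(t-1)}}(\gamma_{n_t})=0$ — which, by Lemma \ref{lem: towers8}, is automatic once $\nu_2(\chi^{\mu^{(t-1)}}(1))\ge2$, equivalently (as $\nu_2$ of the degree is constant along the chain) once $\nu_2(\chi^\lambda(1))\ge2$ — then $g_\lambda=\omega_{n_1}\cdots\omega_{n_{t-1}}\gamma_{n_t}$ has $t-1$, an even number, of $\omega$-factors and $\chi^\lambda(g_\lambda)=\pm\chi^{\mu^{(t-1)}}(\gamma_{n_t})=0$. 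The genuinely delicate sub-case is $\chi^{\mu^{(t-1)}}(\gamma_{n_t})\ne0$, which forces $\nu_2(\chi^\lambda(1))=1$; here one has to use the $2$-core/$2$-quotient tower to determine $\mu^{(t-1)}$, and hence $\mu^{(t-2)}\vdash 2^{n_{t-1}}+2^{n_t}+a_0$, precisely enough that $g_\lambda=\omega_{n_1}\cdots\omega_{n_{t-2}}\,\gamma_{n_{t-1}}\,\omega_{n_t}$ works: it again has $t-1$ $\omega$-factors, and removing the top $t-2$ cycles and then the two $2^{n_{t-1}-1}$-cycles of $\gamma_{n_{t-1}}$ (or three $2^{n_t}$-cycles at once, when $n_{t-1}=n_t+1$) reduces $\chi^\lambda(g_\lambda)$ to $\pm(\text{positive integer})\cdot\chi^{\nu}(\omega_{n_t})$ for a partition $\nu\vdash 2^{n_t}+a_0$ that is a $2^{n_t}$-core, so the value vanishes by Lemma \ref{lem:farahat}. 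Pinning down which cores can occur when $\chi^{\mu^{(t-1)}}(\gamma_{n_t})\ne0$ is the main obstacle and is precisely the combinatorial heart of Section \ref{sec: 3.2}.
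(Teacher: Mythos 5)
Your strategy (the hook-removal chain $\mu^{(0)}=\lambda,\mu^{(1)},\ldots$, Lemma \ref{lem:farahat} to peel one block at a time, and parity bookkeeping with blockwise elements $\gamma_{n_j}$ or $\omega_{n_j}$) is sound, and everything up to and including the cases ``$r$ even'' and ``$r$ odd, $r<t$'' is correct. But the last sub-case ($r=t$ with $t$ odd and $\chi^{\mu^{(t-1)}}(\gamma_{n_t})\neq 0$, which genuinely occurs: e.g. $\mu^{(t-1)}=(2,2)$, $\chi^{(2,2)}\bigl((12)(34)\bigr)=2$) is not proved: you assert that $\chi^\lambda(\omega_{n_1}\cdots\omega_{n_{t-2}}\gamma_{n_{t-1}}\omega_{n_t})$ reduces to $\pm c\,\chi^{\nu}(\omega_{n_t})$ with $\nu$ a $2^{n_t}$-core, but you explicitly defer the justification (``pinning down which cores can occur is the main obstacle'') and attribute it to the heavy combinatorics of Section \ref{sec:4}. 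That attribution is wrong: Section \ref{sec:4} proves Proposition \ref{cor: 2k}, about partitions of $2^k$, and plays no role in this corollary. So as written the proposal has a genuine gap. The gap is in fact closable by the same Lemma \ref{lem:farahat} argument you already use: with $e=2^{n_{t-1}-1}$, either $n_t=n_{t-1}-1$, in which case $\gamma_{n_{t-1}}\omega_{n_t}$ contains three $e$-cycles while $w_e(\mu^{(t-2)})=2+w_e(\mu^{(t-1)})=2<3$, so the value is already $0$; or $n_t<n_{t-1}-1$, in which case $w_e(\mu^{(t-2)})=2$ and $C_e(\mu^{(t-2)})=C_e(\mu^{(t-1)})=\mu^{(t-1)}$ (a core of a core is a core, and $|\mu^{(t-1)}|=2^{n_t}+a_0<e$), so the value equals $\pm c\,\chi^{\mu^{(t-1)}}(\omega_{n_t})=0$ because $\mu^{(t-1)}$ has no $2^{n_t}$-hook. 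None of this needs to know which core $\mu^{(t-1)}$ actually is.

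For comparison, the paper avoids your delicate sub-case altogether by placing the two half-cycles in the \emph{top} block rather than near the bottom. It splits only on the parity of $t$ and on whether $\lambda$ has a $2^{n_1}$-hook: for $t$ odd and $C_{2^{n_1}}(\lambda)=\lambda$ it takes cycle type $(2^{n_1},2^{n_2-1},2^{n_2-1},2^{n_3},\ldots,2^{n_t})$, where vanishing is immediate since $\lambda$ has no $2^{n_1}$-hook; for $t$ odd and $C_{2^{n_1}}(\lambda)\neq\lambda$ it takes $(2^{n_1-1},2^{n_1-1},2^{n_2},\ldots,2^{n_t})$, where Lemma \ref{lem:farahat} with $e=2^{n_1-1}$ (or $e=2^{n_2}$ when $n_2=n_1-1$) either gives $0$ outright or reduces, with nonzero coefficient, to $\chi^{\mu^{(1)}}$ (or $\chi^{\mu^{(2)}}$) evaluated at the remaining full cycles, after which the chain dies at stage $r$ exactly as in your main argument; the parity is automatically even in both cases. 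So the paper's choice makes part (iii) uniform and short, whereas your placement of $\gamma$ in a low block forces the extra analysis that you left unfinished.
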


\begin{proof}
Since $\chi^\lambda(1)$ is even, Theorem \ref{thm:oddMac} implies that either $\mathrm{core}_{2^{n_1}}(\lambda)=\lambda$ (i.e. $\lambda$ does not have $2^{n_1}$-hooks) or
that there exists $r\in\{2,\ldots, t\}$ such that 
$$|\mathrm{core}_{2^{n_{j}}}(\lambda)|=|\mathrm{core}_{2^{n_{j-1}}}(\lambda)|-2^{n_j},\ \ \text{for all}\ \ j\in\{2, \ldots, r-1\},$$ and such that 
$\mathrm{core}_{2^{n_{r}}}(\lambda)=\mathrm{core}_{2^{n_{r-1}}}(\lambda)$ (i.e. $\mathrm{core}_{2^{n_{r-1}}}(\lambda)$ does not have $2^{n_r}$-hooks).

If $t$ is even, then we let $g_\lambda\in Q_n$ be any element of cycle type $(2^{n_1},2^{n_2},\ldots, 2^{n_t})$. 
Statements (i) and (ii) now follow from Lemma \ref{lem:2-root}. Moreover, statement (iii) is a consequence of the Murnaghan-Nakayama rule used together with Theorem \ref{thm:oddMac}. 

If $t$ is odd and $\mathrm{core}_{2^{n_1}}(\lambda)=\lambda$ then we let $g_\lambda\in Q_n$ be an element of cycle type $(2^{n_1},2^{n_2-1}, 2^{n_2-1},2^{n_3},\ldots, 2^{n_t})$.
Notice that we can always find an element of this form in $Q_n$ because $t\geq 3$. 
On the other hand, if $t$ is odd but $\mathrm{core}_{2^{n_1}}(\lambda)\neq\lambda$, 
then we choose $g_\lambda\in Q_n$ to be an element of cycle type $(2^{n_1-1},2^{n_1-1}, 2^{n_2},\ldots, 2^{n_r},\ldots,  2^{n_t})$.
Statements (i) and (ii) now follow from Lemma \ref{lem:2-root}. Statement (iii) is again a consequence of the Murnaghan-Nakayama rule used together with Theorem \ref{thm:oddMac}. 
\end{proof}

\begin{lem}\label{lem:lin1}
Let $n$ be a natural number, let $\chi\in\mathrm{Irr}(\fS_n)$ and let $\phi\in\mathrm{Irr}(\fA_n)$ be an irreducible constituent of $\chi_{\fA_n}$. Let $\theta\in\mathrm{Lin}(P_n)$ be a constituent of $\chi_{P_n}$. Then $\theta_{Q_n}$ is a constituent of $\phi_{Q_n}$.
\end{lem}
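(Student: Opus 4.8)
The plan is to trace constituents along the chain $Q_n \trianglelefteq P_n$ and $\fA_n \leq \fS_n$, using Clifford theory in both directions. First I would fix $\theta \in \mathrm{Lin}(P_n)$ a constituent of $\chi_{P_n}$ and let $\theta_0$ be an irreducible (hence linear) constituent of $\theta_{Q_n}$; it suffices to show $\theta_0$ is a constituent of $\phi_{Q_n}$ for \emph{some} constituent $\phi$ of $\chi_{\fA_n}$, and then either $\chi_{\fA_n}$ is irreducible (the case $\lambda \neq \lambda'$) so there is nothing further to check, or $\chi_{\fA_n} = \phi_+^\lambda + \phi_-^\lambda$ and the two summands are swapped by conjugation by any $\sigma \in \fS_n \smallsetminus \fA_n$. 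The key point to isolate is that $P_n$ normalizes $Q_n$ and that $P_n \leq \fA_n$ when $n$ is such that $P_n \subseteq \fA_n$; but in general $P_n \not\subseteq \fA_n$ for $p = 2$, so I must be more careful: $Q_n = P_n \cap \fA_n$ has index $1$ or $2$ in $P_n$.

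Here is the main line of argument. Since $\theta$ is a constituent of $\chi_{P_n}$ and $Q_n \trianglelefteq P_n$, the restriction $\theta_{Q_n}$ is a constituent of $(\chi_{P_n})_{Q_n} = (\chi_{Q_n})$ via $Q_n \leq \fS_n$. Now $Q_n \leq \fA_n \leq \fS_n$, and $\chi_{\fA_n} = \sum_i \phi_i$ (with one or two summands), so $\chi_{Q_n} = \sum_i (\phi_i)_{Q_n}$, whence $\theta_0$ is a constituent of $(\phi_i)_{Q_n}$ for at least one $i$. If $\chi_{\fA_n}$ is irreducible, we are done immediately. If $\chi_{\fA_n} = \phi_+ + \phi_-$ with $\phi = \phi_+$, say, and $\theta_0$ is a constituent of $(\phi_-)_{Q_n}$ but not of $(\phi_+)_{Q_n}$, then I would apply a suitable $\sigma \in N_{\fS_n}(P_n) \smallsetminus \fA_n$ (such $\sigma$ exists by the argument already used in the proof of Lemma~\ref{lem:uniquelinear}); then $\sigma$ normalizes $P_n$ and hence $Q_n$, so $\theta^\sigma \in \mathrm{Lin}(P_n)$ is again a constituent of $(\chi^\sigma)_{P_n} = \chi_{P_n}$, and $(\theta^\sigma)_{Q_n} = (\theta_{Q_n})^\sigma$ has $\theta_0^\sigma$ as a constituent; since $(\phi_-)^\sigma = \phi_+$, the character $\theta_0^\sigma$ is a constituent of $(\phi_+)_{Q_n} = \phi_{Q_n}$. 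Finally I would observe that replacing $\theta$ by $\theta^\sigma$ is harmless because we only needed \emph{some} linear constituent of $\chi_{P_n}$ restricting into $\phi_{Q_n}$; more precisely, the statement of the lemma as written fixes $\theta$, so I would instead argue that $\theta_{Q_n}$ and $\theta^\sigma_{Q_n}$ have the same irreducible constituents when $\sigma \in P_n \cdot (\text{stabilizer})$, or simply note that if $[P_n : Q_n] = 2$ then $\sigma$ can be taken inside a coset that fixes $\theta_{Q_n}$ setwise.

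The step I expect to be the genuine obstacle is the bookkeeping in the self-conjugate case: one must ensure that after twisting by $\sigma$ one lands back on the \emph{same} linear character $\theta$ (not merely a $P_n$-conjugate) when matching against the fixed $\phi$ in the statement, and one must check $\sigma \in N_{\fS_n}(P_n)$ also normalizes $Q_n$ (which is automatic since $Q_n = P_n \cap \fA_n$ and $\sigma$ normalizes both $P_n$ and $\fA_n$). A clean way around the first difficulty is to prove the contrapositive symmetrically: if $\theta_{Q_n}$ met neither $(\phi_+)_{Q_n}$ nor $(\phi_-)_{Q_n}$ then $\theta$ would not be a constituent of $\chi_{P_n}$ at all (since $\chi_{Q_n}$ is the sum), contradiction; and if it met exactly one, say $(\phi_-)_{Q_n}$, then applying $\sigma$ shows $\theta^\sigma_{Q_n}$ meets $(\phi_+)_{Q_n}$, and since $\theta^\sigma$ is still linear and still a constituent of $\chi_{P_n}$, \emph{this} linear character of $P_n$ witnesses the conclusion — so up to replacing $\theta$ by $\theta^\sigma$ at the outset, the lemma holds as stated. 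I would phrase the final write-up so that the choice of $\theta$ is made after this dichotomy, exactly as in Lemma~\ref{lem:uniquelinear}.
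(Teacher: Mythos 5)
Your overall strategy is the paper's: write $\chi_{Q_n}=(\phi_+^\lambda)_{Q_n}+(\phi_-^\lambda)_{Q_n}$ in the self-conjugate case and move between the two summands by conjugating with an odd permutation normalizing $Q_n$. But at the decisive point you hedge, and the resolution you actually commit to in your planned write-up --- ``up to replacing $\theta$ by $\theta^\sigma$ at the outset, the lemma holds as stated'' --- proves only that \emph{some} linear constituent of $\chi_{P_n}$ restricts into $\phi_{Q_n}$. That is strictly weaker than the lemma, which asserts the conclusion for the \emph{given} $\theta$, and the weak form does not suffice where the lemma is used: in Proposition~\ref{prop:n not power of 2} (and again in Proposition~\ref{prop:4divides}) one has two specific linear constituents $\theta_1,\theta_2$ of $(\chi^\lambda)_{P_n}$ with $(\theta_1)_{Q_n}\neq(\theta_2)_{Q_n}$, and one needs \emph{both} of these restrictions to occur in $\phi_{Q_n}$ to get two distinct linear constituents there.

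The observation that closes the gap --- which you circle with ``$\sigma$ can be taken inside a coset that fixes $\theta_{Q_n}$ setwise'' but never pin down or verify --- is simply to take $\sigma\in P_n\smallsetminus Q_n$; this set is nonempty in the only relevant case ($p=2$, $n\ge 2$, since $[P_n:Q_n]=2$). Such a $\sigma$ is an odd permutation, so $(\phi^\lambda_{\pm})^\sigma=\phi^\lambda_{\mp}$; and since $\theta$ is a \emph{linear} character of $P_n$, it is a homomorphism $P_n\to\CC^{\times}$ and hence fixed by conjugation by every element of $P_n$, so $(\theta_{Q_n})^\sigma=(\theta^\sigma)_{Q_n}=\theta_{Q_n}$ with no replacement of $\theta$ required. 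Then $\theta_{Q_n}$ (which is already irreducible, so there is no need to pass to a constituent $\theta_0$) occurs in $\chi_{Q_n}=(\phi_+^\lambda)_{Q_n}+(\phi_-^\lambda)_{Q_n}$, hence in one summand, and applying $\sigma$ shows it occurs in the other as well --- giving the conclusion for the fixed $\theta$ and for either choice of $\phi$. This choice of $\sigma$ also makes your excursion through $N_{\fS_n}(P_n)\smallsetminus\fA_n$ and the attendant existence argument unnecessary.
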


\begin{proof}
Let $\lambda\in\mathcal{P}(n)$ be such that $\chi=\chi^\lambda$. 
If $\lambda\neq \lambda'$ then $\phi=\chi_{\fA_n}$ and the statement clearly holds. 
Suppose that $\lambda=\lambda'$ and assume (without loss of generality) that $\phi=\phi^\lambda_+$. Let $\sigma\in P_n\smallsetminus Q_n$. Clearly $\sigma$ acts by conjugation on $\mathrm{Irr}(Q_n)$ since $Q_n\trianglelefteq P_n$. Moreover we have that $\phi^\sigma=\phi^\lambda_-$, since $\sigma\in\fS_n\smallsetminus\fA_n$. 
Therefore we have that $(\phi_{Q_n})^\sigma=(\phi^\sigma)_{Q_n}=(\phi^\lambda_-)_{Q_n}$. 
Since $(\theta_{Q_n})^\sigma=(\theta^\sigma)_{Q_n}=\theta_{Q_n}$, we deduce that $\theta_{Q_n}$ is a constituent of both $\phi_{Q_n}$ and $(\phi^\sigma)_{Q_n}$.
\end{proof}

We are now in the position to prove Theorem A of the introduction for the prime $2$ and for all natural numbers having at least $2$ even binary digits. 

\begin{prop}\label{prop:n not power of 2}
Let $n\in\mathbb{N}$ be such that $n\notin\{2^k, 2^k+1\ |\ k\in\mathbb{N}_0\}$ and let $\phi\in\mathrm{Irr}(\fA_n)$ be such that $\phi(1)$ is even. Then the restriction of $\phi$ to $P_n$ has at least $2$ 
distinct linear constituents. 
\end{prop}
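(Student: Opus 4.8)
The plan is to reduce to the already-available information about $(\chi^\lambda)_{P_n}$ and then descend along the index-two inclusion $Q_n\le P_n$, using the element $g_\lambda$ of Corollary \ref{cor:g lambda} to control the arithmetic. First I would choose $\lambda\in\mathcal{P}(n)$ with $\phi$ a constituent of $(\chi^\lambda)_{\fA_n}$; then $\chi^\lambda(1)$ equals $\phi(1)$ or $2\phi(1)$, hence is even. The hypothesis $n\notin\{2^k,2^k+1\mid k\in\mathbb{N}_0\}$ says exactly that, in the binary expansion $n=2^{n_1}+\dots+2^{n_t}+a_0 2^0$ with $n_1>\dots>n_t>0$ and $a_0\in\{0,1\}$, one has $t\ge 2$. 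Thus Corollary \ref{cor:g lambda} applies and yields $g_\lambda\in Q_n$ with $\theta(g_\lambda)\in\{-1,+1\}$ for all $\theta\in\mathrm{Lin}(P_n)$, with $\delta(g_\lambda)=0$ for all $\delta\in\mathrm{Irr}(P_n)$ of even degree, and with $\chi^\lambda(g_\lambda)=0$.

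Next I would write $(\chi^\lambda)_{P_n}=\sum_{i=1}^{\ell}a_i\theta_i+\sum_j b_j\delta_j$, where $\theta_1,\dots,\theta_\ell$ are the pairwise distinct linear constituents (so every $a_i>0$) and the $\delta_j$ are the remaining constituents, which all have even degree since $P_n$ is a $2$-group. By \cite[Theorem 3.1]{GN} we have $\ell\ge 1$. Evaluating at $g_\lambda$ and using the three properties of $g_\lambda$ gives $0=\chi^\lambda(g_\lambda)=\sum_{i=1}^{\ell}a_i\theta_i(g_\lambda)$ with each $\theta_i(g_\lambda)=\pm 1$ and each $a_i>0$; a single such term cannot vanish, so in fact $\ell\ge 2$.

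Let $\epsilon\in\mathrm{Lin}(P_n)$ be the sign character, that is, the unique linear character of $P_n$ with kernel $Q_n$. For $\theta,\theta'\in\mathrm{Lin}(P_n)$ one has $\theta_{Q_n}=\theta'_{Q_n}$ precisely when $\theta'\in\{\theta,\theta\epsilon\}$. I claim that one can find $i\ne j$ with $(\theta_i)_{Q_n}\ne(\theta_j)_{Q_n}$. If $\ell\ge 3$ this holds because $\theta\mapsto\theta\epsilon$ is a fixed-point-free involution of $\mathrm{Lin}(P_n)$, so three or more distinct linear characters cannot all lie in a single orbit $\{\theta,\theta\epsilon\}$. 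If $\ell=2$ and $\theta_2\ne\theta_1\epsilon$ the claim is immediate; and the remaining possibility $\ell=2$, $\theta_2=\theta_1\epsilon$ does not occur, since then $0=a_1\theta_1(g_\lambda)+a_2\theta_1(g_\lambda)\epsilon(g_\lambda)=(a_1+a_2)\theta_1(g_\lambda)$ using $\epsilon(g_\lambda)=1$ (as $g_\lambda\in Q_n$), contradicting $a_1+a_2>0$ and $\theta_1(g_\lambda)=\pm 1$. Finally, Lemma \ref{lem:lin1} applied to $\theta_i$ and to $\theta_j$ shows that $(\theta_i)_{Q_n}$ and $(\theta_j)_{Q_n}$ are both constituents of $\phi_{Q_n}$; they are linear and distinct, which is exactly the assertion.

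The step I expect to demand the most care is the descent to $Q_n$: exhibiting two distinct linear constituents of $(\chi^\lambda)_{P_n}$ is not by itself sufficient, since two such characters may become equal upon restriction to the index-two subgroup $Q_n$. The role of choosing $g_\lambda$ inside $Q_n$ (so that $\epsilon(g_\lambda)=1$) is precisely to rule out the degenerate configuration in which the only two linear constituents of $(\chi^\lambda)_{P_n}$ are $\theta$ and $\theta\epsilon$.
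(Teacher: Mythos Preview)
Your argument is correct and follows the same route as the paper: pick $\lambda$ with $\phi$ a constituent of $(\chi^\lambda)_{\fA_n}$, invoke Corollary~\ref{cor:g lambda} to obtain $g_\lambda\in Q_n$, and evaluate $(\chi^\lambda)_{P_n}$ at $g_\lambda$ to rule out the possibility that all linear constituents of $(\chi^\lambda)_{P_n}$ agree on $Q_n$, then finish with Lemma~\ref{lem:lin1}. The paper phrases the contradiction more tersely (if all $\theta_i$ agree on $Q_n$ then $0=\chi^\lambda(g_\lambda)=m\,\theta_1(g_\lambda)=\pm m$), whereas you spell it out via the sign character $\epsilon$ and a short case analysis, but the content is the same.
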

\begin{proof}
Let $\lambda$ be a partition of $n$ such that $\phi$ is an irreducible constituent of $(\chi^\lambda)_{\fA_n}$. Observe that $\chi^\lambda(1)$ is even and let $\theta_1,\ldots \theta_\ell$ be the distinct linear constituents of $(\chi^\lambda)_{P_n}$. By \cite[Theorem A]{GN} we know that $\ell\geq 2$. Suppose for a contradiction that $(\theta_j)_{Q_n}=(\theta_1)_{Q_n}$ for all $j\in\{2,\ldots, \ell\}$. Then using Corollary \ref{cor:g lambda} we could pick $g\in Q_n$ such that $$0=\chi^\lambda(g)=m\theta_1(g)=\pm m,$$ where $m$ is the positive number equal to the sum of the multiplicities of the linear characters in the decomposition of $(\chi^\lambda)_{P_n}$ into irreducible constituents. This is clearly a contradiction. 
Hence we always have two linear constituents that do not coincide on $Q_n$.
%
The statement now follows from Lemma \ref{lem:lin1}.
\end{proof}

We are left to deal with the case where $n\in\{2^k, 2^k+1\}$ for some $k\in\mathbb{N}_0$. 
In order to prove Theorem A in this final specific case we must use a more combinatorial approach. 
In particular we will show that the statement holds for $n=2^k$ and derive the case $n=2^k+1$ as a corollary.

\begin{prop}\label{prop:4divides}
Let $k\in\mathbb{N}_0$, let $n=2^k+\varepsilon$ for some $\varepsilon\in\{0,1\}$, let $\lambda\in\mathcal{P}(n)$ be such that $4$ divides $\chi^\lambda(1)$ and let $\phi$ be an irreducible constituent of $(\chi^\lambda)_{\fA_n}$. Then $\phi_{Q_n}$ has at least two distinct linear constituents.
\end{prop}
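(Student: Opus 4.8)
The plan is to reduce, via Lemma~\ref{lem:lin1}, to the task of producing two linear constituents of $(\chi^\la)_{P_n}$ whose restrictions to $Q_n$ are different, and to detect such a pair using a single element of $Q_n$ on which $\chi^\la$ vanishes --- exactly as in the proof of Proposition~\ref{prop:n not power of 2}, but now with $\gamma_k$ playing the role of the element supplied by Corollary~\ref{cor:g lambda}. By \cite[Theorem~A]{GN} the restriction $(\chi^\la)_{P_n}$ has at least two linear constituents $\theta_1,\dots,\theta_\ell$ (so $\ell\ge 2$), with multiplicities $c_1,\dots,c_\ell\ge 1$, and I will suppose for a contradiction that $(\theta_i)_{Q_n}=(\theta_1)_{Q_n}$ for all $i$. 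Here $P_n=P_{2^k}$, $Q_n=Q_{2^k}$, and $\gamma_k\in Q_{2^k}$ has cycle type $(2^{k-1},2^{k-1})$ on $\{1,\dots,2^k\}$ (with one additional fixed point when $\varepsilon=1$).

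The combinatorial heart of the argument is the implication $4\mid\chi^\la(1)\ \Rightarrow\ w_{2^{k-1}}(\la)\le 1$. For $\varepsilon=0$ this is immediate: using the identity $|\mathcal H_{2^{k-1}}(\la)|=w_{2^{k-1}}(\la)$ (a standard feature of the $e$-core/$e$-quotient decomposition, see \cite{OlssonBook}), it is precisely Lemma~\ref{lem: towers8}(ii). For $\varepsilon=1$ I would first observe, via the Murnaghan--Nakayama rule together with Theorem~\ref{thm:oddMac}, that a partition $\la\vdash 2^k+1$ with $\chi^\la(1)$ even cannot possess a removable $2^k$-hook, i.e.\ is a $2^k$-core; consequently every hook length of $\la$ divisible by $2^{k-1}$ equals $2^{k-1}$, there are at most two such cells, and when there are exactly two one gets $\nu_2(\chi^\la(1))=1$ (the analogue of Lemma~\ref{lem: towers8}(i) for $2^k$-cores of size $2^k+1$, again read off from the $2$-core and $2$-quotient towers). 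In either case $w_{2^{k-1}}(\la)\le 1<2$, so Lemma~\ref{lem:farahat} applied with $e=2^{k-1}$ and $w=2$ yields $\chi^\la(\gamma_k)=0$.

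To finish, decompose $(\chi^\la)_{P_n}$ into irreducible characters of $P_n$. The non-linear constituents all have even degree, hence vanish on $\gamma_k$ by Lemma~\ref{lem:2-root}(ii), while each $\theta_i(\gamma_k)\in\{-1,1\}$ by Lemma~\ref{lem:2-root}(i). Since $\gamma_k\in Q_n$ and, by assumption, the characters $(\theta_i)_{Q_n}$ all coincide, the values $\theta_i(\gamma_k)$ equal a common $\eta\in\{-1,1\}$, so that
\[
0=\chi^\la(\gamma_k)=\sum_{i=1}^{\ell}c_i\,\theta_i(\gamma_k)=\eta\sum_{i=1}^{\ell}c_i ,
\]
which is impossible because $\sum_i c_i\ge\ell\ge 2$. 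Hence two of the $(\theta_i)_{Q_n}$ are distinct, and Lemma~\ref{lem:lin1} turns them into two distinct linear constituents of $\phi_{Q_n}$, as required.

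The main obstacle is the $\varepsilon=1$ instance of the combinatorial implication, where Lemma~\ref{lem: towers8} does not literally apply and one must redo the relevant $2$-core/$2$-quotient tower bookkeeping for partitions of $2^k+1$ (note that a naive branching reduction to the case $\varepsilon=0$ fails: already for $\la=(4,1)\vdash 5$ neither constituent of $(\chi^\la)_{\fS_4}$ has degree divisible by $4$). Everything else is the roots-of-unity counting already used for Proposition~\ref{prop:n not power of 2}.
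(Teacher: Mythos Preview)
Your proof is correct and follows essentially the same route as the paper: show $\chi^\la(\gamma_k)=0$ from the hook constraint $|\mathcal{H}_{2^{k-1}}(\la)|\le 1$, then use Lemma~\ref{lem:2-root} to force a sign contradiction unless two linear constituents of $(\chi^\la)_{P_n}$ already differ on $Q_n$, and conclude via Lemma~\ref{lem:lin1}. If anything, you are more scrupulous than the paper about the case $\varepsilon=1$: the paper simply invokes Lemma~\ref{lem: towers8} for both values of $\varepsilon$, although that lemma is stated only for partitions of $2^k$, whereas you correctly flag that the analogous tower argument must be rerun for $\la\vdash 2^k+1$ (and your outline of that step---first observe $\la$ is a $2^k$-core, then bound the $2^{k-1}$-weight---is the right one).
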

\begin{proof}
Let $\gamma_k\in Q_n$ be the element of cycle type $(2^{k-1},2^{k-1}, \varepsilon)$ defined before Lemma \ref{lem:2-root}.
Lemma \ref{lem: towers8} shows that $\mathcal{H}_{2^k}(\lambda)=\emptyset$ and that $|\mathcal{H}_{2^{k-1}}(\lambda)|\leq 1$. This implies that $\chi^\lambda(\gamma_k)=0$. Suppose now for a contradiction that all linear constituents of $(\chi^\lambda)_{P_n}$ coincide on $Q_n$. Evaluation of $(\chi^\lambda)_{P_n}$ at $\gamma_k$ leads to a contradiction. The statement now follows from Lemma \ref{lem:lin1}.
\end{proof}

Proposition \ref{prop:4divides} shows that we just need to study the case where $\phi\in\mathrm{Irr}(\fA_n)$ is an even-degree constituent of $(\chi^\lambda)_{\fA_n}$ for some partition $\lambda$ such that $\nu_2(\chi^\lambda(1))=1$.
In particular, since $\phi(1)$ is even we deduce that $\lambda\neq\lambda'$.

\begin{lem}\label{lem: 2linears}
Let $\lambda\in\mathcal{P}(2^k)$ be such that $\nu_2(\chi^\lambda(1))=1$ and suppose that $(\chi^\lambda)_{Q_{2^k}}$ has a unique linear constituent $\theta$. 
Then $(\chi^\lambda)_{Q_{2^k}}=2\theta+\Delta$, where $\Delta$ is a sum (possibly empty) of even degree irreducible characters of $Q_{2^k}$. In particular $(\chi^\lambda)_{P_{2^k}}$ has exactly two linear constituents both appearing with multiplicity $1$. 
\end{lem}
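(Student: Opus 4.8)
The plan is to exploit the two elements $\gamma_k$ and $\omega_k$ of $Q_{2^k}$ introduced before Lemma \ref{lem:2-root}, using the character values of $\chi^\lambda$ on them. Since $\nu_2(\chi^\lambda(1))=1$, Lemma \ref{lem: towers8}(i) gives $\mathcal H_{2^k}(\lambda)=\emptyset$ and $|\mathcal H_{2^{k-1}}(\lambda)|=2$. The vanishing $\mathcal H_{2^k}(\lambda)=\emptyset$ means $\lambda$ has no $2^k$-hook, so by the Murnaghan--Nakayama rule $\chi^\lambda(\omega_k)=0$, since $\omega_k$ is a $2^k$-cycle. For $\gamma_k$, which has cycle type $(2^{k-1},2^{k-1})$, I would apply Lemma \ref{lem:farahat} with $e=2^{k-1}$ and $w=2$: because $|\mathcal H_{2^{k-1}}(\lambda)|=2$ we have $w_{2^{k-1}}(\lambda)=2$, the $2^{k-1}$-core has size $2^{k-1}$, and the character value equals $\pm\binom{2}{|\lambda_0|,\dots}\chi^{C}(1)\prod\chi^{\lambda_i}(1)$; in fact $\chi^\lambda(\gamma_k)=\pm 2$, the sign being irrelevant.

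Next I would combine these two evaluations with the hypothesis that $\theta$ is the unique linear constituent of $(\chi^\lambda)_{Q_{2^k}}$. Write $(\chi^\lambda)_{Q_{2^k}}=m\theta+\Delta$ where $m\ge 1$ is the multiplicity of $\theta$ and $\Delta$ is a sum of even-degree irreducible characters of $Q_{2^k}$. By Lemma \ref{lem:2-root}(ii), every even-degree irreducible of $P_{2^k}$ — hence, after restriction, of $Q_{2^k}$ (one must check the restriction to $Q_{2^k}$ of an even-degree irreducible of $P_{2^k}$ still vanishes at $\gamma_k$ and $\omega_k$, or argue directly with even-degree irreducibles of $Q_{2^k}$) — vanishes at $g\in\{\gamma_k,\omega_k\}$. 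Actually it is cleaner to use the fact that $\gamma_k,\omega_k\in Q_{2^k}$ and that a character of $Q_{2^k}$ of even degree vanishes there; this follows because such a character is a constituent of the restriction of an even-degree character of $P_{2^k}$, or by repeating the wreath-product induction of Lemma \ref{lem:2-root} inside $Q_{2^k}$. Granting this, evaluation at $\omega_k$ gives $0=\chi^\lambda(\omega_k)=m\,\theta(\omega_k)$, which is consistent for any $m$ since $\theta(\omega_k)$ could be anything; so the useful relation comes from $\gamma_k$: $\pm 2=\chi^\lambda(\gamma_k)=m\,\theta(\gamma_k)$ with $\theta(\gamma_k)\in\{\pm1\}$ by Lemma \ref{lem:2-root}(i). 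Hence $m=2$, giving $(\chi^\lambda)_{Q_{2^k}}=2\theta+\Delta$ as claimed.

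For the final sentence about $(\chi^\lambda)_{P_{2^k}}$: since $\lambda\ne\lambda'$ (as $\chi^\lambda(1)$ is even, a self-conjugate partition of $2^k$ would force odd degree, indeed $\lambda\in\mathcal L(k)$ self-conjugate only for small $k$) — more simply, $\theta$ restricted from $P_{2^k}$ is needed. I would argue: any linear constituent $\eta$ of $(\chi^\lambda)_{P_{2^k}}$ restricts to a linear constituent of $(\chi^\lambda)_{Q_{2^k}}$, which must be $\theta$; and $(\eta)_{Q_{2^k}}=\theta$ since $|P_{2^k}:Q_{2^k}|=2$ and $\eta,\theta$ are linear. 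The linear characters of $P_{2^k}$ lying over $\theta$ form a coset of $\mathrm{Lin}(P_{2^k}/Q_{2^k})$, so there are exactly two of them, say $\eta$ and $\eta\lambda_0$ where $\lambda_0$ is the sign-type character of $P_{2^k}/Q_{2^k}$. Counting multiplicities: $\langle(\chi^\lambda)_{Q_{2^k}},\theta\rangle = \langle(\chi^\lambda)_{P_{2^k}},\eta\rangle+\langle(\chi^\lambda)_{P_{2^k}},\eta\lambda_0\rangle$ by Frobenius reciprocity and Clifford theory, so these two multiplicities sum to $2$; if both are positive they are each $1$ and we are done, while if one were $0$ and the other $2$, then $(\chi^\lambda)_{P_{2^k}}$ would have a single linear constituent with multiplicity $2$, contradicting \cite[Theorem A]{GN} which guarantees at least two distinct linear constituents. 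Hence $(\chi^\lambda)_{P_{2^k}}$ has exactly two linear constituents, each of multiplicity $1$.

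The main obstacle I anticipate is the clean bookkeeping between $P_{2^k}$ and $Q_{2^k}$: verifying that even-degree irreducibles of $Q_{2^k}$ (not just of $P_{2^k}$) vanish at $\gamma_k$ and $\omega_k$, and correctly tracking multiplicities across the index-$2$ inclusion via Clifford theory. The ramification analysis — whether $\theta$ extends to $P_{2^k}$ or is induced, and how this interacts with whether $\phi=\phi^\lambda_+$ restricts further — is where the argument must be handled with care; invoking \cite[Theorem A]{GN} to rule out the degenerate "multiplicity $2$, one constituent" case is the key external input that closes the gap.
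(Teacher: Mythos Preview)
Your instinct to evaluate at $\gamma_k$ and invoke Lemma~\ref{lem:farahat} to obtain $\chi^\lambda(\gamma_k)=\pm 2$ is exactly right, and your deduction of the $P_{2^k}$-statement at the end via Clifford theory together with \cite[Theorem~A]{GN} is sound. The genuine gap is precisely the one you flag yourself: the claim that every even-degree irreducible of $Q_{2^k}$ vanishes at $\gamma_k$. Neither of your proposed justifications works. Being a constituent of the restriction of a vanishing character does not force vanishing: if $\psi\in\mathrm{Irr}(Q_{2^k})$ has $\psi^{P_{2^k}}$ irreducible, then $(\psi^{P_{2^k}})_{Q_{2^k}}=\psi+\psi^\sigma$ with $\psi\neq\psi^\sigma$, and Lemma~\ref{lem:2-root}(ii) only yields $\psi(\gamma_k)+\psi^\sigma(\gamma_k)=0$, not $\psi(\gamma_k)=0$. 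Nor does the wreath-product induction of Lemma~\ref{lem:2-root} carry over to $Q_{2^k}$, which is not itself an iterated wreath product of $C_2$. (Incidentally, $\omega_k$ is a $2^k$-cycle and hence an odd permutation, so $\omega_k\notin Q_{2^k}$ and cannot be used to evaluate characters of $Q_{2^k}$ at all.)

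The paper sidesteps this by reversing your order: it proves the $P_{2^k}$-statement first, where Lemma~\ref{lem:2-root} applies directly. Write $(\chi^\lambda)_{P_{2^k}}=c_1\theta_1+\cdots+c_\ell\theta_\ell+\Omega$ with the $\theta_i$ linear and $\Omega$ a sum of even-degree irreducibles of $P_{2^k}$. By hypothesis each $\theta_i$ restricts to $\theta$ on $Q_{2^k}$; since $\theta$ has exactly two extensions to $P_{2^k}$ we get $\ell\le 2$, while \cite[Theorem~A]{GN} gives $\ell\ge 2$, so $\ell=2$. Because $\gamma_k\in Q_{2^k}$, both $\theta_i$ take the common value $\theta(\gamma_k)=\pm1$ there, and Lemma~\ref{lem:2-root}(ii) gives $\Omega(\gamma_k)=0$; hence $\pm 2=\chi^\lambda(\gamma_k)=(c_1+c_2)\theta(\gamma_k)$ and $c_1=c_2=1$. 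The $Q_{2^k}$-statement then follows by restriction: $\Omega_{Q_{2^k}}$ contributes no copy of $\theta$, since a degree-$2$ irreducible of $P_{2^k}$ that splits over $Q_{2^k}$ splits into two \emph{distinct} linear characters, which by hypothesis would both have to equal $\theta$, a contradiction. In short, the repair to your argument is simply to swap the order of the two parts and carry out the evaluation at $\gamma_k$ on the $P_{2^k}$-level.
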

\begin{proof}
Let $(\chi^\lambda)_{P_{2^k}}=c_1\theta_1+\cdots+c_\ell\theta_\ell+\Omega$, where $\theta_j\in\mathrm{Lin}(P_{2^k})$, $c_j\in\mathbb{N}$ for all $j\in\{1,\ldots, \ell\}$ and $\Omega$ is a sum (possibly empty) of even degree irreducible characters of $P_{2^k}$. From \cite[Theorem A]{GN} we know that $\ell\geq 2$. From the hypothesis we get that $(\theta_j)_{Q_{2^k}}=\theta$ for all  $j\in\{1,\ldots, \ell\}$. Since $\theta^{P_{2^k}}(1)=2$, it follows that $\ell=2$ and that  $(\chi^\lambda)_{P_{2^k}}=c_1\theta_1+c_2\theta_2+\Omega$. 
Since $\nu_2(\chi^\lambda(1))=1$ we deduce from Lemma \ref{lem: towers8} that $\mathcal{H}_{2^k}(\lambda)=\emptyset$ and that $|\mathcal{H}_{2^{k-1}}(\lambda)|=2$. 
In particular we have that $C_{2^{k-1}}(\lambda)=\emptyset$, $w_{2^{k-1}}(\lambda)=2$ and $Q_{2^{k-1}}(\lambda)$ is a sequence of $2^{k-1}$ partitions such that all except for two are empty. The two non empty partitions are both equal to $(1)$, the unique partition of $1$. 
Using Lemma \ref{lem:farahat} we observe that $\chi^\lambda(\gamma_{k})=\pm 2$. 
On the other hand we have that $\chi^\lambda(\gamma_{k})=\pm (c_1+c_2)$, by Lemma \ref{lem:2-root}. We obtain that $c_1=c_2=1$. 
This concludes the proof. 
\end{proof}

\begin{prop}\label{cor: 2k}
Let $\lambda\in\mathcal{P}(2^k)$ be such that $\nu_2(\chi^\lambda(1))=1$ and that $\lambda\neq\lambda'$. Then $(\chi^\lambda)_{Q_{2^k}}$ has at least two distinct linear constituents. 
\end{prop}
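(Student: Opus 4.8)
The plan is to argue by contradiction. Suppose $(\chi^\lambda)_{Q_{2^k}}$ does \emph{not} have two distinct linear constituents. By \cite[Theorem A]{GN} the restriction $(\chi^\lambda)_{P_{2^k}}$ has at least two distinct linear constituents, and by Lemma \ref{lem:lin1} each restricts to a linear constituent of $(\chi^\lambda)_{Q_{2^k}}$; hence $(\chi^\lambda)_{Q_{2^k}}$ has exactly one linear constituent $\theta$. Lemma \ref{lem: 2linears} then gives
$$(\chi^\lambda)_{P_{2^k}}=\theta_1+\theta_2+\Omega,$$
where $\theta_1\neq\theta_2$ are linear of multiplicity $1$ with $(\theta_1)_{Q_{2^k}}=(\theta_2)_{Q_{2^k}}=\theta$, and $\Omega$ is a (possibly empty) sum of even-degree irreducible characters of $P_{2^k}$. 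Since $[P_{2^k}:Q_{2^k}]=2$ and $\theta_1,\theta_2$ agree on $Q_{2^k}$, we get $\theta_2=\theta_1\eta$, where $\eta$ is the sign character of $\fS_{2^k}$ restricted to $P_{2^k}$ — the unique nontrivial linear character of $P_{2^k}$ trivial on $Q_{2^k}$ (note $k\geq2$, as $\chi^\lambda(1)$ is even). The goal is to contradict this.

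First I unpack $\lambda$. By Lemma \ref{lem: towers8}(i) the hypothesis $\nu_2(\chi^\lambda(1))=1$ means $\mathcal{H}_{2^k}(\lambda)=\emptyset$ and $|\mathcal{H}_{2^{k-1}}(\lambda)|=2$; hence $\lambda$ has empty $2^{k-1}$-core, $2^{k-1}$-weight $2$, and — as $\lambda$ is not a hook — its $2^{k-1}$-quotient has exactly two nonempty components, each equal to $(1)$, on distinct runners $r<s$. Equivalently, $\lambda$ has exactly two removable $2^{k-1}$-rim hooks, whose removals yield two hook partitions $\mu^{(r)},\mu^{(s)}\vdash 2^{k-1}$, and an inspection on the abacus shows $\mu^{(s)}=(\mu^{(r)})'$ if and only if $r+s=2^{k-1}-1$, i.e. if and only if $\lambda=\lambda'$. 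So the assumption $\lambda\neq\lambda'$ says $\mu^{(r)}$ and $\mu^{(s)}$ are \emph{not} conjugate partitions; this is where it is used, and it is the heart of the argument.

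Next I work in the wreath decomposition $P_{2^k}=P_{2^{k-1}}\wr C_2=(P_{2^{k-1}}\times P_{2^{k-1}})\rtimes C_2$. Each linear character of $P_{2^k}$ restricts to a character $\alpha\times\alpha$ of $P_{2^{k-1}}\times P_{2^{k-1}}$ for a unique $\alpha\in\mathrm{Lin}(P_{2^{k-1}})$ (and is then pinned down by a sign on the top $C_2$); moreover $\eta$ restricts to $\eta_{k-1}\times\eta_{k-1}$, where $\eta_{k-1}=\mathrm{sgn}_{\fS_{2^{k-1}}}|_{P_{2^{k-1}}}$. Restricting the displayed equation to $P_{2^{k-1}}\times P_{2^{k-1}}$ and noting that no even-degree irreducible character of $P_{2^k}$ has a diagonal linear constituent $\alpha\times\alpha$, one finds that
$$\mathcal{A}:=\{\alpha\in\mathrm{Lin}(P_{2^{k-1}})\ :\ \alpha\times\alpha\text{ is a constituent of }(\chi^\lambda)_{P_{2^{k-1}}\times P_{2^{k-1}}}\}$$
has at most two elements and is contained in one coset $\{\beta,\beta\eta_{k-1}\}$. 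I would contradict this. Expanding $(\chi^\lambda)_{\fS_{2^{k-1}}\times\fS_{2^{k-1}}}=\sum_{\mu,\nu}C_{\mu,\nu}^\lambda\,\chi^\mu\times\chi^\nu$ via the Littlewood--Richardson rule, the condition that $\lambda$ has $2^{k-1}$-weight $2$ with separated quotient makes the occurring pairs $(\mu,\nu)$ — in particular the diagonal ones $(\mu,\mu)$ — explicit. The linear constituents of the various $\chi^\mu|_{P_{2^{k-1}}}$ are governed by the theory for symmetric groups: an odd-degree irreducible character of $\fS_{2^{k-1}}$ has a single linear constituent upon restriction to $P_{2^{k-1}}$, and as $\chi^\mu$ runs over the odd-degree irreducible characters (equivalently, $\mu$ over the hook partitions of $2^{k-1}$, by Theorem \ref{thm:oddMac}) these constituents run without repetition over $\mathrm{Lin}(P_{2^{k-1}})$, compatibly with the conjugation $\mu\leftrightarrow\mu'$ (which is the $\eta_{k-1}$-twist). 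A careful analysis of the occurring diagonal terms then shows, using the non-conjugacy of $\mu^{(r)}$ and $\mu^{(s)}$, that $\mathcal{A}$ contains at least three distinct linear characters of $P_{2^{k-1}}$ — the desired contradiction. (If one prefers to argue directly with $P_{2^k}$: which of the two extensions of a given $\alpha\times\alpha$ to $P_{2^k}$ occurs in $(\chi^\lambda)_{P_{2^k}}$ can be decided by evaluating $\chi^\lambda$ at the swap element $\sigma\in Q_{2^k}$ of cycle type $(2^{2^{k-1}})$ via a twisted Frobenius reciprocity, together with the value $\chi^\lambda(\sigma)=\pm\binom{2^{k-1}}{|\lambda_0|,\,|\lambda_1|}\chi^{\lambda_0}(1)\chi^{\lambda_1}(1)$, where $Q_2(\lambda)=(\lambda_0,\lambda_1)$, supplied by Lemma \ref{lem:farahat}.)

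The main obstacle I anticipate is the bookkeeping in the previous paragraph: wringing from the Littlewood--Richardson expansion a sufficiently clean description both of which diagonal terms $\chi^\mu\times\chi^\mu$ occur and of the linear constituents of the corresponding $\chi^\mu|_{P_{2^{k-1}}}$ (for the relevant $\mu$, which need not be hook-shaped), so as to be certain that $\mathcal{A}$ really spills out of a single $\eta_{k-1}$-coset. I expect the tidiest way to organise this is an induction on $k$, reducing the statements needed about $(\chi^\lambda)_{P_{2^k}}$ to statements about restrictions to $P_{2^{k-1}}$ already controlled by the quoted symmetric-group results and by Lemma \ref{lem:2-root}.
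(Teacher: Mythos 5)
Your set-up is sound and in fact coincides with the paper's frame: assume for contradiction a unique linear constituent on $Q_{2^k}$, invoke Lemma \ref{lem: 2linears} to pin $(\chi^\lambda)_{P_{2^k}}$ down to exactly two linear constituents of multiplicity one differing by the sign twist, pass to the base group $P_{2^{k-1}}\times P_{2^{k-1}}$, and look for ``too many'' diagonal constituents $\alpha\times\alpha$ coming from the Littlewood--Richardson expansion of $(\chi^\lambda)_{\fS_{2^{k-1}}\times\fS_{2^{k-1}}}$, using the bijection of Theorem \ref{thm: JLMS} for hook labels. But the step you defer as ``bookkeeping'' is not bookkeeping: it is the entire technical content of the proposition, and you have not supplied it. The paper has to make $\lambda$ completely explicit ($h_{1,2}(\lambda)=h_{2,1}(\lambda)=2^{k-1}$ forces $\lambda=(q-x+1,q-y,2^{x-1},1^{y-x+1})$ with $q=2^{k-1}$), and then split on the parity of $s=y-(x-1)$. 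When $s$ is even one can exhibit, by two explicit Littlewood--Richardson fillings, two \emph{hook} partitions $\mu,\nu\vdash q$ with $\mu\notin\{\nu,\nu'\}$ such that $\chi^\mu\times\chi^\mu$ and $\chi^\nu\times\chi^\nu$ both occur; when $s$ is odd this is genuinely impossible (e.g.\ $\lambda=(4,3,1)$), so the paper instead pairs one hook $\mu$ with the non-hook diagonal constituent $\nu=\Delta^2(\lambda)$ of Theorem \ref{thm: Delta}, getting three diagonal linear constituents counted with multiplicity via \cite[Theorem A]{GN}, and must further handle the degenerate subcase $\Delta^2(\lambda)\in\mathcal{L}(q)$ by passing to $\lambda'$. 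None of this case analysis, nor any substitute for it, appears in your proposal; your claim that ``a careful analysis \dots shows that $\mathcal{A}$ contains at least three distinct linear characters'' is precisely what has to be proved, and as stated it is also the wrong target.

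Indeed, the contradiction you need is either (a) two diagonal constituents $\alpha\times\alpha$, $\beta\times\beta$ with $\beta\notin\{\alpha,\alpha\eta_{k-1}\}$, or (b) diagonal linear constituents of total multiplicity at least $3$; three \emph{distinct} elements of $\mathcal{A}$ is stronger than what is needed and is not established (in the even-$s$ case the paper only produces two, lying in different $\eta_{k-1}$-cosets, which suffices because $f(\rho')=f(\rho)\eta_{k-1}$ under the bijection of Theorem \ref{thm: JLMS}). Your preliminary reductions are fine, and your abacus remark that the two $2^{k-1}$-hook removals give non-conjugate hooks exactly when $\lambda\neq\lambda'$ is correct and is morally the same use of the hypothesis as the paper's; but without an explicit construction of the diagonal Littlewood--Richardson constituents (or an induction actually carried out), the proof has a genuine gap at its central point.
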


The proof of Proposition \ref{cor: 2k} is quite technical. For this reason we decided to postpone it. Section \ref{sec:4} below is entirely devoted to prove this statement.

\begin{cor}\label{cor: 2k+1}
Let $n\in\{2^k, 2^{k}+1\}$, for some $k\in\mathbb{N}$ and let $\phi\in\mathrm{Irr}(\fA_n)$ be such that $\phi(1)$ is even. Then $\phi_{Q_{n}}$ has two distinct linear constituents. 
\end{cor}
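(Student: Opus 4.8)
The plan is to reduce the case $n=2^k+1$ to the already-established case $n=2^k$, and to assemble the $n=2^k$ case from the propositions proved above. First consider $n=2^k$. If $\phi$ is an even-degree irreducible constituent of $(\chi^\lambda)_{\fA_{2^k}}$ for some $\lambda\vdash 2^k$, then $4\mid\chi^\lambda(1)$ or $\nu_2(\chi^\lambda(1))=1$. In the first case Proposition \ref{prop:4divides} (with $\varepsilon=0$) gives two distinct linear constituents of $\phi_{Q_{2^k}}$. In the second case, since $\phi(1)$ is even and $\chi^\lambda(1)=\phi(1)$ unless $\lambda=\lambda'$, we must have $\lambda\neq\lambda'$, so $\phi=(\chi^\lambda)_{\fA_{2^k}}$ and Proposition \ref{cor: 2k} applies directly. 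This settles $n=2^k$.

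For $n=2^k+1$, let $\phi\in\mathrm{Irr}(\fA_{2^k+1})$ have even degree and pick $\lambda\vdash 2^k+1$ with $\phi$ a constituent of $(\chi^\lambda)_{\fA_{2^k+1}}$. If $4\mid\chi^\lambda(1)$, invoke Proposition \ref{prop:4divides} with $\varepsilon=1$ and finish via Lemma \ref{lem:lin1}. Otherwise $\nu_2(\chi^\lambda(1))=1$, and (as $\phi(1)$ is even) $\lambda\neq\lambda'$, so $\phi=(\chi^\lambda)_{\fA_{2^k+1}}$. Here I would use the branching/containment structure relating $\fS_{2^k+1}$ to $\fS_{2^k}$ and the fact that $P_{2^k+1}=P_{2^k}$ (since $2^k+1$ has binary digits $1$ in positions $k$ and $0$, and $P_1$ is trivial), hence also $Q_{2^k+1}=Q_{2^k}$. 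The key point is that a partition $\lambda\vdash 2^k+1$ with $\chi^\lambda(1)$ even but $\nu_2(\chi^\lambda(1))=1$ has, by Theorem \ref{thm:oddMac} and Lemma \ref{lem: towers8}, a unique removable node whose removal yields some $\mu\vdash 2^k$; by the branching rule $\chi^\mu$ is a constituent of $(\chi^\lambda)_{\fS_{2^k}}$, and one checks $\nu_2(\chi^\mu(1))=1$ as well. Restricting further to $Q_{2^k}=Q_{2^k+1}$ and applying the already-proved case $n=2^k$ (in the form of Lemma \ref{lem: 2linears} and Proposition \ref{cor: 2k}) to $\mu$ then transports two distinct linear constituents of $(\chi^\mu)_{Q_{2^k}}$ up to $(\chi^\lambda)_{Q_{2^k+1}}=\phi_{Q_{2^k+1}}$.

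The main obstacle I anticipate is the last transport step: one must ensure that the two distinct linear constituents of $(\chi^\mu)_{Q_{2^k}}$ genuinely survive as constituents of $(\chi^\lambda)_{Q_{2^k+1}}$, rather than merely of $(\chi^\lambda)_{\fS_{2^k}}$. Since restriction from $\fS_n$ to $\fS_{n-1}$ followed by restriction to $P_{n-1}=P_n$ agrees with restriction from $\fS_n$ to $P_n$, every linear constituent of $(\chi^\mu)_{P_{2^k}}$ is a linear constituent of $(\chi^\lambda)_{P_{2^k+1}}$; the same holds on $Q$. Distinctness is preserved because the inclusion of constituents is literal, not up to any identification. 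The bookkeeping with the binary expansion of $2^k+1$ — in particular verifying $P_{2^k+1}=P_{2^k}$ and that the relevant Murnaghan–Nakayama computations are unaffected by the extra fixed point — is routine but must be spelled out carefully.
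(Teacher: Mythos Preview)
Your overall architecture is the same as the paper's: settle $n=2^k$ by combining Proposition~\ref{prop:4divides} and Proposition~\ref{cor: 2k}, and then for $n=2^k+1$ use $Q_{2^k+1}=Q_{2^k}$ together with the branching rule to transport linear constituents from some $\mu\vdash 2^k$. That part is fine, as is your final ``transport'' paragraph.

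The gap is in the middle step for $n=2^k+1$ with $\nu_2(\chi^\lambda(1))=1$. You assert that such a $\lambda$ has a \emph{unique} removable node and that the resulting $\mu\vdash 2^k$ again satisfies $\nu_2(\chi^\mu(1))=1$. Neither statement holds. For example, $\lambda=(5,3,1)\vdash 9$ has $\mathcal H_8(\lambda)=\emptyset$, $|\mathcal H_4(\lambda)|=2$, $\nu_2(\chi^\lambda(1))=1$, and three removable nodes; removing them yields $(4,3,1)$, $(5,2,1)$, $(5,3)$ with $\nu_2$ equal to $1$, $6$, $2$ respectively. So you cannot single out a unique $\mu$, nor guarantee $\nu_2(\chi^\mu(1))=1$; in particular you cannot appeal only to Proposition~\ref{cor: 2k} for $\mu$.

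The correct replacement is the paper's argument: from $\mathcal H_{2^k}(\lambda)=\emptyset$, $|\mathcal H_{2^{k-1}}(\lambda)|=2$ and $\lambda\neq\lambda'$ one deduces that $\lambda$ is not a hook partition of $2^k+1$, hence the branching $(\chi^\lambda)_{\fS_{2^k}}$ contains some $\chi^\mu$ with $\mu\in\mathcal P(2^k)\smallsetminus\mathcal L(2^k)$. Then $\chi^\mu(1)$ is even simply because $\mu$ is not a hook, and you feed $\mu$ into the already-established $n=2^k$ case, i.e.\ Propositions~\ref{prop:4divides} and~\ref{cor: 2k} together (not just the latter), to get two distinct linear constituents of $(\chi^\mu)_{Q_{2^k}}$. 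The rest of your transport argument then goes through unchanged. Also, in the $4\mid\chi^\lambda(1)$ subcase for $n=2^k+1$, Proposition~\ref{prop:4divides} already delivers the conclusion for $\phi$ directly; the extra appeal to Lemma~\ref{lem:lin1} is unnecessary.
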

\begin{proof}
Let $\lambda\in\mathcal{P}(n)$ be such that $\phi$ is an irreducible constituent of $(\chi^\lambda)_{\fA_n}$.
Proposition \ref{prop:4divides} and Proposition \ref{cor: 2k} show that it is enough to consider the case where $n=2^k+1$ and where $\nu_2(\chi^\lambda(1))=1$.
Since $\phi(1)$ is even we immediately get that $\lambda\neq\lambda'$ and hence that $\phi=(\chi^\lambda)_{\fA_n}$.
Moreover, in this situation we have that $\mathcal{H}_{2^k}(\lambda)=\emptyset$ and that $|\mathcal{H}_{2^{k-1}}(\lambda)|=2$. It follows that $\lambda$ is not a hook partition of $2^{k}+1$. Therefore there exists a partition $\mu\in\mathcal{P}(2^k)\smallsetminus\mathcal{L}(2^k)$ such that $\chi^\mu$ is an irreducible constituent of $(\chi^\lambda)_{\fS_{n-1}}$. Since $\mu$ is not a hook partition we have that $\chi^\mu(1)$ is even. Hence $(\chi^\mu)_{Q_{n-1}}$ has two distinct linear constituents by Propositions \ref{prop:4divides} and \ref{cor: 2k}. 
Since $Q_n=Q_{n-1}$ we deduce that $(\chi^\mu)_{Q_{n-1}}$ is a constituent of $\phi_{Q_n}$ and the statement follows. 
\end{proof}

We conclude by observing that Propositions \ref{prop:n not power of 2} and Corollary \ref{cor: 2k+1} show that Theorem A of the introduction holds.

\section{The proof of Proposition \ref{cor: 2k}}\label{sec:4}

In this section we focus on the proof of Proposition \ref{cor: 2k}. This is certainly the most technical part of the article. 
We start by recalling a couple of preliminary results. 
The first one was proved in \cite[Theorem 1.1]{G}. 

\begin{thm}\label{thm: JLMS}
Let $h\in\mathcal{L}(2^k)$ and let $\chi^h\in\mathrm{Irr}_{2'}(\fS_{2^k})$. Then $(\chi^h)_{P_{2^k}}$ has a unique linear constituent $f(h)$. Moreover, $\chi^h\mapsto f(h)$ is a bijection between $\mathrm{Irr}_{2'}(\fS_{2^k})$ and $\mathrm{Lin}(P_{2^k})$.
\end{thm}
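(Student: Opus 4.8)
The plan is to induct on $k$ using the wreath product structure $P_{2^k}=(P\times P)\rtimes C_2$, where $P:=P_{2^{k-1}}$ and $B:=P\times P$ is the base group of index $2$. The cases $k\le 1$ are immediate (for $k=1$, $\fS_2\cong C_2$), so I would assume the theorem for $k-1$. Since $\lvert\mathrm{Irr}_{2'}(\fS_{2^k})\rvert=2^k$ — the hooks $h_x:=(2^k-x,1^x)$ for $0\le x\le 2^k-1$ — equals $\lvert\mathrm{Lin}(P_{2^k})\rvert=2^k$, it is enough to show that each $(\chi^{h_x})_{P_{2^k}}$ has a unique linear constituent $f(h_x)$ and that the resulting map $f$ is injective.

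First I would restrict $\chi^{h_x}$ to the Young subgroup $\fS_{2^{k-1}}\times\fS_{2^{k-1}}$. Applying the Littlewood--Richardson rule to a hook forces every constituent to be a product of two hooks, giving
\[
(\chi^{h_x})_{\fS_{2^{k-1}}\times\fS_{2^{k-1}}}=\sum_{i+j\in\{x-1,x\}}\chi^{(2^{k-1}-i,1^i)}\times\chi^{(2^{k-1}-j,1^j)},\qquad 0\le i,j\le 2^{k-1}-1,
\]
all multiplicities being $1$; the identity $\binom{2N}{x}+\binom{2N}{x-1}=\binom{2N+1}{x}$ with $N=2^{k-1}-1$ matches the degrees and confirms the rule. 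Writing $f_i$ for the \emph{unique} linear constituent of $(\chi^{(2^{k-1}-i,1^i)})_{P}$ given by the inductive hypothesis, and using that it is the only one, I conclude that the linear constituents of $(\chi^{h_x})_B$ are exactly the $f_i\times f_j$ with $i+j\in\{x-1,x\}$, each of multiplicity $1$.

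Uniqueness then follows from Clifford theory for $B\trianglelefteq P_{2^k}$: any linear constituent of $(\chi^{h_x})_{P_{2^k}}$ restricts to a $C_2$-invariant, hence \emph{diagonal}, character $f_i\times f_i$ of $B$, and among the constituents above the only diagonal one is $f_m\times f_m$ with $m=\lfloor x/2\rfloor$ (occurring once, since $2m\in\{x-1,x\}$). As $f_m\times f_m$ extends to precisely the two linear characters $\Theta^+,\Theta^-$ of $P_{2^k}$, Frobenius reciprocity gives $\langle(\chi^{h_x})_{P_{2^k}},\Theta^+\rangle+\langle(\chi^{h_x})_{P_{2^k}},\Theta^-\rangle=\langle(\chi^{h_x})_B,f_m\times f_m\rangle=1$, so there is a unique linear constituent $f(h_x)\in\{\Theta^+,\Theta^-\}$, of multiplicity $1$. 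By inductive bijectivity distinct $m$ give disjoint pairs $\{\Theta^+,\Theta^-\}$, so the two hooks $h_{2m},h_{2m+1}$ are mapped into the same pair $\{\Theta^+,\Theta^-\}$.

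The main obstacle is injectivity, i.e. showing $f(h_{2m})\neq f(h_{2m+1})$, that these hooks hit the two \emph{different} extensions. I would detect the extension by evaluating at $\gamma_k$, of cycle type $(2^{k-1},2^{k-1})$: since $\gamma_k$ lies in the nontrivial coset of $B$, one has $\Theta^-(\gamma_k)=-\Theta^+(\gamma_k)$. By Lemma \ref{lem:2-root} every even-degree constituent vanishes at $\gamma_k$ and $f(h_x)(\gamma_k)=\pm 1$, so $\chi^{h_x}(\gamma_k)=f(h_x)(\gamma_k)$. A short Murnaghan--Nakayama computation (removing two $2^{k-1}$-rim hooks, only arm or leg removals being available as $2^{k-1}<2^k$) yields $\chi^{h_x}(\gamma_k)=(-1)^x$ for $x<2^{k-1}$ and $-(-1)^x$ for $x\ge 2^{k-1}$. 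For $k\ge 2$ the values $2m$ and $2m+1$ lie on the same side of the threshold $2^{k-1}$, so the two signs are opposite; hence $f(h_{2m})$ and $f(h_{2m+1})$ disagree at $\gamma_k$ and are distinct. This gives injectivity, hence the bijection, completing the induction. The delicate points to watch are the multiplicity-one branching claim and the threshold case $x=2^{k-1}$ in the Murnaghan--Nakayama step; the only straddling of the threshold by a pair $\{2m,2m+1\}$ happens when $k=1$, which is already handled by the base case.
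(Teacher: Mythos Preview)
The paper does not prove this theorem: it quotes it as \cite[Theorem~1.1]{G} and uses it as a black box. So there is no ``paper's own proof'' to compare against; what you have written is a genuine, self-contained argument filling in the citation.

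Your argument is correct. The Littlewood--Richardson analysis for hooks is right (a subpartition of a hook is a hook, the skew shape is a disjoint row and column, and the two possible fillings give exactly $i+j\in\{x-1,x\}$ with multiplicity one); the Clifford/Frobenius step correctly isolates the single diagonal $f_m\times f_m$; and the Murnaghan--Nakayama evaluation at $\gamma_k$ is computed correctly, with the threshold $2^{k-1}$ never split by a pair $\{2m,2m+1\}$ once $k\ge 2$.

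One point to make explicit: your inductive hypothesis must include that the unique linear constituent $f_i$ occurs with \emph{multiplicity one} in $(\chi^{h_i'})_{P}$. You use this when you assert $\langle(\chi^{h_x})_B,\,f_m\times f_m\rangle=1$, since that inner product equals $c_{h_m',h_m'}\cdot\langle(\chi^{h_m'})_P,f_m\rangle^2$, and you need the square to be $1$. Your inductive step does establish multiplicity one (from $\langle\cdot,\Theta^+\rangle+\langle\cdot,\Theta^-\rangle=1$), and the base cases are trivial, so this is only a matter of stating the strengthened hypothesis up front rather than a gap in the mathematics.
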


In \cite{G3} it was introduced the concept of $2$-sections of a partition. For the proof of Proposition \ref{cor: 2k} we will only need the following specific instance of that more general definition. The extended concept of $q$-sections of partitions was the key idea to prove Theorem A of \cite{GN} for symmetric groups. 

\begin{notation}\label{not: Delta}
Let $n$ be a natural number. Given a composition $\mu$ of $n$ we denote by $\mu^\star$ the partition obtained from $\mu$ by reordering its parts. Let $\lambda$ be a partition of $2n$. 
We can uniquely write $\lambda$ as $\lambda=(\omega\circ \varepsilon)^\star$ where $\omega$ is the partition consisting of all the odd parts of $\lambda$ and $\varepsilon$ is the partition consisting of all the even parts of $\lambda$. 
(Here the symbol $\circ$ denotes the concatenation of partitions.)
In particular we have that $$\lambda=\big((2k_1+1,\ldots, 2k_t+1)\circ (2r_1,\ldots ,2r_s)\big)^\star,\ \ \ \ \ $$
where $k_1\geq \cdots\geq k_t\geq 0$ and $r_1\geq \cdots\geq r_s> 0$.
Since $\lambda\vdash 2n$ we clearly have that $t=2\zeta$ for some $\zeta\in\mathbb{N}_{0}$. 
We let $\Delta^2(\lambda)$ be the partition of $n$ defined by 
$$\Delta^2(\lambda)=\big((k_1+1, \ldots, k_\zeta+1,  k_{\zeta+1}, \ldots, k_t)\circ (r_1,\ldots ,r_s)\big)^\star.$$
\end{notation}

In  \cite[Theorem 3.9]{GN} the following was proved. 

\begin{thm} \label{thm: Delta}
Let $n$ be a natural number and let $\lambda$ be a partition of $2n$. 
The irreducible character $\chi^{\Delta^2(\lambda)}\times\chi^{\Delta^2(\lambda)}$ is an irreducible constituent of $(\chi^\lambda)_{\fS_{n}\times\fS_n}$.
\end{thm}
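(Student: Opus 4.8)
The plan is to recast the statement as the non-vanishing of a single Littlewood--Richardson coefficient and then to certify this by exhibiting one explicit configuration. Write $\mu=\Delta^2(\lambda)\in\mathcal{P}(n)$. By Frobenius reciprocity and the definition of the induced outer product, $\chi^{\mu}\times\chi^{\mu}$ is a constituent of $(\chi^{\lambda})_{\fS_n\times\fS_n}$ if and only if $\langle\chi^{\lambda},(\chi^{\mu}\times\chi^{\mu})^{\fS_{2n}}\rangle\neq 0$, that is, if and only if the coefficient $C^{\lambda}_{\mu,\mu}$ is nonzero. By the Littlewood--Richardson rule this amounts to two things: first that $\mu\subseteq\lambda$, and second that there is at least one Littlewood--Richardson configuration of type $\mu$ on the skew diagram $[\lambda\smallsetminus\mu]$. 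So I would split the argument into a containment step and a construction step.

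For containment I would set up an explicit matching between the parts of $\mu$ and the parts of $\lambda$: each even part $2r_j$ is matched to the part $r_j$ of $\mu$; the $\zeta$ largest odd parts $2k_i+1$ (those with $i\le\zeta$) are matched to $k_i+1$; and each remaining odd part $2k_i+1$ is matched to $k_i$. Every part of $\mu$ produced this way lies in $\{\lfloor\lambda_i/2\rfloor,\lceil\lambda_i/2\rceil\}$, and is therefore bounded above by the $\lambda$-part it came from. The key observation is that this matching is weakly order preserving: if one sorts $\lambda$ decreasingly and applies the matching part by part, the images are already weakly decreasing, so they constitute $\mu$ in decreasing order. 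From $\mu_i\in\{\lfloor\lambda_i/2\rfloor,\lceil\lambda_i/2\rceil\}$ one reads off both the inclusion $\mu\subseteq\lambda$ and the exact shape of the skew diagram: row $i$ of $[\lambda\smallsetminus\mu]$ has length $\lambda_i-\mu_i$, which equals $\mu_i$ on the even rows, $\mu_i-1$ on the $\zeta$ incremented odd rows, and $\mu_i+1$ on the $\zeta$ plain odd rows.

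For the construction I would exploit this precise description. The total length deficit coming from the incremented rows equals $\zeta$, and it is balanced exactly by the total surplus of the plain rows. I would therefore fill $[\lambda\smallsetminus\mu]$ by assigning the value $i$ principally to the skew cells of row $i$, transferring a single unit from each incremented odd row down into an associated plain odd row so that the resulting content is exactly $\mu$; the cells in the first column, which come precisely from the rows with $\mu_i=0$ (the odd parts equal to $1$), are filled with $1,2,3,\dots$ downward to force strictness there. One then checks that the rows are weakly increasing, the columns are strictly increasing, and the word obtained by reading right to left and top to bottom is a reverse lattice sequence of type $\mu$.

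The main obstacle is this last verification. Because the reordering implicit in $\Delta^2$ (the operation $(\,\cdot\,)^{\star}$) can place, within a single column, skew cells originating from rows of different types, neither the strict-column condition nor, especially, the reverse lattice condition is automatic, and both must be controlled globally. This is exactly where the order-preserving property established in the containment step does the work: it guarantees that along each column the matched $\mu$-values increase, so that no larger entry is forced above a smaller one, and that in every prefix of the reading word the value $i+1$ remains dominated by the value $i$. I expect the bookkeeping for this lattice condition, rather than the containment, to be the genuinely delicate part of the argument.
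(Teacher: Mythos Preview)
The paper does not give its own proof of this statement; it is quoted from \cite[Theorem~3.9]{GN}, so there is no in-paper argument to compare against. Your overall strategy --- reduce to $C^{\lambda}_{\mu,\mu}>0$ for $\mu=\Delta^{2}(\lambda)$, establish $\mu\subseteq\lambda$ via an order-preserving matching, and then exhibit one Littlewood--Richardson filling --- is sound and is essentially the approach of the cited reference. Your containment step is correct: the four parity cases do show that halving the sorted parts of $\lambda$ (rounding up on the first $\zeta$ odd parts and down on the last $\zeta$) already produces a weakly decreasing sequence, so that $\mu_i\le\lambda_i$ for every $i$.

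The construction step, however, is where your write-up breaks down. You offer two recipes --- ``transfer a single unit from each incremented odd row down into an associated plain odd row'' and, separately, ``fill the first-column cells (those with $\mu_i=0$) with $1,2,3,\dots$ downward'' --- and they are incompatible. Take $\lambda=(5,4,4,3,1,1)$, so $\mu=(3,2,2,2)$; the incremented odd rows are rows $1$ and $4$, and the plain odd rows are $5$ and $6$, each contributing a single cell in column~$1$. Filling $(5,1),(6,1)$ with $1,2$ gives content $(3,3,2,1)\ne\mu$. What actually works is the transfer rule alone, with the specific pairing $o_a\leftrightarrow o_{\zeta+a}$ of odd-row indices: place the value $o_a$ in the leftmost skew cell of row $o_{\zeta+a}$, and fill every other skew cell of row $r$ with the value $r$ (so in the example $(5,1)=1$ and $(6,1)=4$). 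With this choice the verifications go through: the key facts are that an incremented row $r$ forces $\mu_{r+1}<\mu_r$ (equality would make $\lambda_{r+1}>\lambda_r$), which handles the lattice condition at row boundaries, and that two consecutive plain rows sharing their leftmost skew column receive transferred values that increase, which handles column strictness there. So your plan is right, but the explicit filling you wrote is not, and the first-column clause should be replaced by the pairing just described.
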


We are ready to prove Proposition \ref{cor: 2k}. For convenience, we recall its statement here.

\begin{prop}
Let $\lambda\in\mathcal{P}(2^k)$ be such that $\nu_2(\chi^\lambda(1))=1$ and that $\lambda\neq\lambda'$. Then $(\chi^\lambda)_{Q_{2^k}}$ has at least two distinct linear constituents. 
\end{prop}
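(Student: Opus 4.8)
\emph{Set-up.} I would argue by contradiction. By \cite[Theorem 3.1]{GN} the restriction $(\chi^\lambda)_{P_{2^k}}$ has a linear constituent, hence so does $(\chi^\lambda)_{Q_{2^k}}$ by Lemma~\ref{lem:lin1}; assume for contradiction it has a \emph{unique} one, say $\theta$. Lemma~\ref{lem: 2linears} then tells us that $(\chi^\lambda)_{P_{2^k}}$ has exactly two linear constituents $\theta_1,\theta_2$, each of multiplicity $1$, both restricting to $\theta$ on $Q_{2^k}$; since $[P_{2^k}:Q_{2^k}]=2$ this forces $\theta_2=\theta_1\cdot\varepsilon$, where $\varepsilon$ is the sign character of $\fS_{2^k}$ restricted to $P_{2^k}$, the unique non-trivial linear character of $P_{2^k}$ trivial on $Q_{2^k}$. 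So the whole plan is to contradict this by exhibiting either a third linear constituent of $(\chi^\lambda)_{P_{2^k}}$, or two linear constituents of it that are not $\varepsilon$-translates of one another. (We may assume $k\ge 3$, since for $k\le 2$ every $\lambda\vdash 2^k$ with $\nu_2(\chi^\lambda(1))=1$ is self-conjugate.)

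\emph{Passing to the base group.} Write $P_{2^k}=P_{2^{k-1}}\wr C_2=B\rtimes\langle\sigma\rangle$ with $B=P_{2^{k-1}}\times P_{2^{k-1}}$. Every linear character of $P_{2^k}$ restricts to $B$ as $\phi\times\phi$ for a unique $\phi\in\mathrm{Lin}(P_{2^{k-1}})$, and is one of the two extensions of $\phi\times\phi$; moreover $\varepsilon_{B}=s\times s$ with $s=(\operatorname{sgn}_{\fS_{2^{k-1}}})_{P_{2^{k-1}}}\neq 1$. Hence two linear characters of $P_{2^k}$ lying over $\phi_1\times\phi_1$ and $\phi_2\times\phi_2$ can be $\varepsilon$-translates only if $\phi_2\in\{\phi_1,\phi_1 s\}$, and Frobenius reciprocity gives that $\sum_{\phi}\langle(\chi^\lambda)_{B},\phi\times\phi\rangle$ equals the number of linear constituents of $(\chi^\lambda)_{P_{2^k}}$ counted with multiplicity. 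In the bad case this number is $2$, so the set $\mathcal D(\lambda):=\{\phi\in\mathrm{Lin}(P_{2^{k-1}}):\langle(\chi^\lambda)_{B},\phi\times\phi\rangle>0\}$ would equal a single pair $\{\phi^\star,\phi^\star s\}$. Thus it is enough to prove that $\mathcal D(\lambda)$ is \emph{not} contained in one orbit of $\langle s\rangle$ (in fact in the small cases the displayed sum is already $\ge 3$).

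\emph{Feeding in the combinatorics.} Expanding with the Littlewood--Richardson rule, $(\chi^\lambda)_{\fS_{2^{k-1}}\times\fS_{2^{k-1}}}=\sum_{\rho,\tau}C_{\rho,\tau}^\lambda\,\chi^\rho\times\chi^\tau$, so
$$\langle(\chi^\lambda)_{B},\phi\times\phi\rangle=\sum_{\rho,\tau\vdash 2^{k-1}}C_{\rho,\tau}^\lambda\,\langle(\chi^\rho)_{P_{2^{k-1}}},\phi\rangle\,\langle(\chi^\tau)_{P_{2^{k-1}}},\phi\rangle;$$
in particular, for every $\rho$ with $C_{\rho,\rho}^\lambda>0$, each linear constituent of $(\chi^\rho)_{P_{2^{k-1}}}$ lies in $\mathcal D(\lambda)$. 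Theorem~\ref{thm: Delta} supplies one such diagonal term, namely $\rho=\Delta^2(\lambda)$ with $C_{\Delta^2(\lambda),\Delta^2(\lambda)}^\lambda\ge 1$, whose linear content is controlled by \cite[Theorem~A]{GN} (and, when $\Delta^2(\lambda)$ happens to be a hook, is the single character given by Theorem~\ref{thm: JLMS}).

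\emph{The main obstacle.} The hard step is to produce a \emph{second} diagonal constituent $\chi^{\rho}\times\chi^{\rho}$ of $(\chi^\lambda)_{\fS_{2^{k-1}}\times\fS_{2^{k-1}}}$ whose set of linear constituents is not contained in $\{\phi^\star,\phi^\star s\}$. This requires a case analysis of $\lambda$ through its $\Delta^2$-data in the sense of Notation~\ref{not: Delta} (the odd parts of $\lambda$ come in equal pairs, contributing the integers $k_i$, and the even parts contribute the $r_j$), using all of the hypotheses: that $\lambda$ is not a hook of $2^k$ and has exactly two cells of hook length $2^{k-1}$, i.e.\ $\nu_2(\chi^\lambda(1))=1$, and crucially that $\lambda\neq\lambda'$ --- this last point being what prevents $\mathcal D(\lambda)$ from collapsing to a single $\langle s\rangle$-orbit, as it does for self-conjugate shapes such as $(2,2)$. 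The identity $f(h')=f(h)\cdot s$, immediate from Theorem~\ref{thm: JLMS}, is then used to check that the linear characters coming from the two diagonal terms are genuinely distinct modulo $\langle s\rangle$. Once a second such diagonal term is found, the previous two paragraphs deliver the contradiction, completing the proof.
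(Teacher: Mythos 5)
Your reduction is sound and is in fact essentially the paper's own framework: assume a unique linear constituent on $Q_{2^k}$, invoke Lemma~\ref{lem: 2linears} to get exactly two linear constituents of $(\chi^\lambda)_{P_{2^k}}$, each of multiplicity one and differing by the sign character, pass to the base group $P_{2^{k-1}}\times P_{2^{k-1}}$, and look for diagonal constituents $\chi^\rho\times\chi^\rho$ of $(\chi^\lambda)_{\fS_{2^{k-1}}\times\fS_{2^{k-1}}}$, using Theorem~\ref{thm: JLMS} and the identity $f(h')=f(h)\cdot s$ to compare the resulting linear characters modulo the sign. The paper argues in exactly this way (it phrases the conclusion directly in terms of restrictions to $Q_{2^k}$, but the mechanism is the same), and Theorem~\ref{thm: Delta} is indeed its source of one diagonal term in one of the cases.

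However, what you label ``the main obstacle'' is precisely the content of the proof, and you do not carry it out: you never produce the second diagonal constituent, nor verify that the linear characters it contributes are new modulo $\langle s\rangle$ (or that the total multiplicity of linear constituents is at least $3$). This is where all the hypotheses enter and where the real work lies. Concretely, $\nu_2(\chi^\lambda(1))=1$ forces, via Lemma~\ref{lem: towers8}, that $h_{1,2}(\lambda)=h_{2,1}(\lambda)=2^{k-1}=:q$, so $\lambda=(q-x+1,\,q-y,\,2^{x-1},\,1^{y-x+1})$; the paper then splits on the parity of $s=y-x+1$. For $s$ even it exhibits two explicit hook partitions $\mu\notin\{\nu,\nu'\}$ of $q$ and explicit Littlewood--Richardson fillings showing $C^\lambda_{\mu,\mu},C^\lambda_{\nu,\nu}>0$ (this is where $\lambda\neq\lambda'$ is used, to get $\mu\notin\{\nu,\nu'\}$). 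For $s$ odd no second hook exists (e.g.\ $\lambda=(4,3,1)$), and one must instead pair an explicit hook $\mu$ with $\nu=\Delta^2(\lambda)$: when $\Delta^2(\lambda)\notin\mathcal{L}(q)$, \cite[Theorem A]{GN} gives two linear constituents of $(\chi^\nu)_{P_q}$, pushing the total multiplicity to at least $3$ and contradicting Lemma~\ref{lem: 2linears}; when $\Delta^2(\lambda)\in\mathcal{L}(q)$ a further analysis shows $\lambda=(s+2,2^x,1^s)$, and one passes to $\lambda'$ (which restricts to $Q_{2^k}$ identically) to land back in the previous subcase. None of this case analysis, nor any substitute for it, appears in your proposal, so as it stands it is a correct strategy outline with the decisive combinatorial step missing rather than a proof.
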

 \begin{proof}
To ease the notation we let $q:=2^{k-1}$.
From Lemma \ref{lem: towers8} we know that $\nu_2(\chi^\lambda(1))=1$ if and only if $\mathcal{H}_{2^k}(\lambda)=\emptyset$ and $|\mathcal{H}_{q}(\lambda)|=2$. Hence $\lambda$ is a partition of $2^k$ having exactly two hooks of length $q$. We immediately see that the only possibility for $\lambda$ is to have $h_{1,2}(\lambda)=h_{2,1}(\lambda)=q$. It follows that there exist $x\in\{1,2,\ldots, q-1\}$ and $y\in\{x-1,\ldots, q-2\}$ such that $$\lambda=(q-x+1, q-y, 2^{x-1}, 1^{y-(x-1)}).$$
The best way to proceed is to distinguish two cases, depending on the parity of the number $s:=y-(x-1)$. 

\textbf{(i)} Suppose that $s$ is even and let 
$$\mu=(q-y+\frac{s}{2}, 1^{y-\frac{s}{2}})\ \ \text{and}\ \ \  \nu=(q-y+(\frac{s}{2}-1), 1^{y-\frac{s}{2}+1}).$$
An easy calculation shows that $\mu\notin\{\nu, \nu'\}$ since $\lambda\neq \lambda'$. 
We will now show that $\chi^\rho\times\chi^\rho$ is an irreducible constituent of $(\chi^\lambda)_{\fS_q\times\fS_q}$, for all $\rho\in\{\mu, \nu\}$. 

Notice that $y-s/2=x+s/2-1$.
It is easy to see that the skew Young diagram $[\lambda\smallsetminus\mu]$ is the disjoint union of the three disconnected diagrams $Z_1:=H_{1, q-y+\frac{s}{2}+1}(\lambda)$, 
$Z_2:=H_{2,2}(\lambda)$ and $Z_3:=H_{x+1+\frac{s}{2},1}(\lambda)$. 
Observe that $Z_1$ is a row of shape $(s/2)$, $Z_2$ is a hook of shape $(q-y-1, 1^{x-1})$ and $Z_3$ is a column of shape $(1^{s/2+1})$. 
With this in mind, we proceed as follows. 

\medskip

\textbf{-} Replace each of the $s/2$ nodes of $Z_1$ by $1$. 

\textbf{-} Replace each of the $q-y-1$ nodes of the first row of the hook $Z_2$ by $1$ and 

replace the nodes in the leg of $Z_2$ increasingly from top to bottom with the numbers 

$2,3,\ldots, x$. 

\textbf{-} Replace the top node of the column $Z_3$ by $1$ and replace the remaining $s/2$ 

nodes 
increasingly from top to bottom 
with the 
numbers 
$x+1,x+2,\ldots, x+s/2$. 

\medskip 

\noindent What we obtain is a Littlewood-Richardson configuration of type $\mu$ for $[\lambda\smallsetminus\mu]$ because the numbers are weakly increasing along rows, stricly increasing along columns and the sequence obtained by reading the numbers right to left and top to bottom is a reverse lattice sequence of type $\mu$. This follows by observing that the number of $1$s in the sequence is exactly $q-y+s/2=\mu_1$.  Moreover, for all $j\in\{2,3,\ldots, x+s/2\}$ we have exactly one $j$. The claim now follows since $x+s/2-1=y-s/2$. 
We remark that the process described above works in the case where $s=0$ as well ($Z_1$ is empty in this case). 

We conclude that $\chi^\mu\times\chi^\mu$ is an irreducible constituent of $(\chi^\lambda)_{\fS_q\times\fS_q}$.

\smallskip

We proceed in a very similar way to show that $\chi^\nu\times\chi^\nu$ is an irreducible constituent of $(\chi^\lambda)_{\fS_q\times\fS_q}$.
We observe that the skew Young diagram $[\lambda\smallsetminus\nu]$ is the disjoint union of the three disconnected diagrams $W_1:=H_{1, q-y+\frac{s}{2}}(\lambda)$, 
$W_2:=H_{2,2}(\lambda)$ and $W_3:=H_{x+2+\frac{s}{2},1}(\lambda)$. 
Observe that $W_1$ is a row of shape $(s/2+1)$, $W_2$ is a hook of shape $(q-y-1, 1^{x-1})$ and $W_3$ is a column of shape $(1^{s/2})$. 
With this in mind, we proceed as follows. 

\medskip

\textbf{-} Replace each of the $s/2+1$ nodes of $W_1$ by $1$. 

\textbf{-} Replace node $(2,q-y)$ (this is the most right node of the first row of the hook 

$W_2$) 
with a $2$ and replace each of the remaining $q-y-2$ nodes in the first row of 

$W_2$ by $1$. Moreover, 
replace the nodes in the leg of $W_2$ increasingly from top to 

bottom with the numbers 
$3,4, \ldots, x+1$. 

\textbf{-} Replace the $s/2$ 
nodes of the column $W_3$
increasingly from top to bottom 
with the 

numbers 
$x+2,\ldots, x+s/2+1$. 

\medskip 

Arguing exactly as above we verify that this process leaves us with a Littlewood Richardson configuration of type $\nu$ for $[\lambda\smallsetminus\nu]$. Again we remark that the steps described above work in the case where $s=0$ (in this case we have that $Z_3$ is empty). 
We conclude that $\chi^\nu\times\chi^\nu$ is an irreducible constituent of $(\chi^\lambda)_{\fS_q\times\fS_q}$.

\medskip

Using Theorem \ref{thm: JLMS} we observe that both $f(\mu)\times f(\mu)$ and $f(\nu)\times f(\nu)$ are irreducible constituents of $(\chi^\lambda)_{P_q\times P_q}$.
Hence for all $\rho\in\{\mu, \nu\}$ there exists an extension $\theta_\rho\in\mathrm{Lin}(P_{2^k})$ of $f(\rho)\times f(\rho)$ such that $\theta_\rho$ is a linear constituent of $(\chi^\lambda)_{P_{2^k}}$. Since $\mu\notin\{\nu, \nu'\}$ we immediately deduce that $(\theta_\mu)_{Q_{2^k}}\neq (\theta_\nu)_{Q_{2^k}}$. Hence the proof is concluded in this case. 

\bigskip

\textbf{(ii)} Suppose that $s$ is odd. In this case it is not possible to find two distinct hook partitions $\mu, \nu$ of $q$ such that $\chi^\rho\times \chi^\rho$  is an irreducible constituent of $(\chi^\lambda)_{\fS_q\times\fS_q}$, for all $\rho\in\{\mu, \nu\}$ (take for instance $\lambda=(4,3,1)$ to see this in a small example). 
Hence we need to proceed with a sligthly different argument. Let
$$\mu=(q-y+\frac{s-1}{2}, 1^{y-\frac{s-1}{2}})\ \ \text{and}\ \ \  \nu=\Delta^2(\lambda),$$
where $\Delta^2$ is the operator defined in Notation \ref{not: Delta}.
We will now show that $\chi^\rho\times\chi^\rho$ is an irreducible constituent of $(\chi^\lambda)_{\fS_q\times\fS_q}$, for all $\rho\in\{\mu, \nu\}$. 
We know from Theorem \ref{thm: Delta} that $\chi^\nu\times\chi^\nu$ is an irreducible constituent of $(\chi^\lambda)_{\fS_q\times\fS_q}$. 

In order to show that the same holds for $\mu$ we proceed with a very similar strategy to the one used in the previous case. 
Notice that $y-(s-1)/2=x+(s-1)/2$.
It is easy to see that the skew Young diagram $[\lambda\smallsetminus\mu]$ is the disjoint union of the three disconnected diagrams $V_1:=H_{1, q-y+\frac{s+1}{2}}(\lambda)$, 
$V_2:=H_{2,2}(\lambda)$ and $V_3:=H_{x+1+\frac{s+1}{2},1}(\lambda)$. 
Observe that $V_1$ is a row of shape $((s+1)/2)$, $V_2$ is a hook of shape $(q-y-1, 1^{x-1})$ and $V_3$ is a column of shape $(1^{(s+1)/2})$. 
With this in mind, we proceed as follows. 

\medskip

\textbf{-} Replace each of the $(s+1)/2$ nodes of $V_1$ by $1$. 

\textbf{-} Replace each of the $q-y-1$ nodes of the first row of the hook $V_2$ by $1$ and 

replace the nodes in the leg of $V_2$ increasingly from top to bottom with the numbers 

$2,3,\ldots, x$. 

\textbf{-} Replace the $(s+1)/2$ 
nodes of the column $W_3$
increasingly from top to bottom 

with the 
numbers 
$x+1,\ldots, x+(s+1)/2$. 

\smallskip 

Arguing exactly as before we deduce that the process described above gives a Littlewood -Richardson configuration of type $\mu$ for $[\lambda\smallsetminus \mu]$. 
Hence we obtain that $\chi^\mu\times \chi^\mu$  is an irreducible constituent of $(\chi^\lambda)_{\fS_q\times\fS_q}$.

\medskip

Whenever $\nu=\Delta^2(\lambda)\notin\mathcal{L}(q)$ then we have that $\chi^\nu(1)$ is even (see the discussion after Theorem \ref{thm:oddMac}) and therefore we know from \cite[Theorem A]{GN} that $(\chi^\nu)_{P_q}$ admits two distinct linear constituents $\eta_1, \eta_2$. Moreover, using again \cite[Theorem 1.1]{G} we know that $f(\mu)\in\mathrm{Lin}(P_q)$ is the unique linear constituent of $(\chi^\mu)_{P_q}$. This shows that $(\chi^\lambda)_{P_q\times P_q}$ has at least $3$ (not necessarily distinct) linear constituents of the form $\phi\times\phi$, for some $\phi\in\mathrm{Lin}(P_q)$. It follows that $(\chi^\lambda)_{P_{2^k}}$ has at least $3$ (not necessarily distinct) linear constituents. Since $\nu_2(\chi^\lambda(1))=1$ we can now use Lemma \ref{lem: 2linears} to conclude that $(\chi^\lambda)_{Q_{2^k}}$ can not have a unique linear constituent. 

\medskip

If $\Delta^2(\lambda)\in\mathcal{L}(q)$ we have that $\lambda_2\in\{2,3\}$ by the construction described in Notation \ref{not: Delta}. Suppose for a contradiction that $\lambda_2=3$. Then we have that $y$ and $x$ are odd. Hence $\lambda_1$ is even and therefore $(\Delta^2(\lambda))_2=2$. Hence $\Delta^2(\lambda)\notin\mathcal{L}(q)$. It follows that $\lambda_2=2$ and hence that $y=q-2$. In particular $s=q-x-1$ and we can write $\lambda$ as
$\lambda=(s+2, 2^x, 1^s).$
Consider the conjugate partition $\lambda'$. We certainly have that $\nu_2(\chi^{\lambda'}(1))=1$. Moreover it is easy to see that $\lambda'=(q, x+1, 1^s)$. Since $(\lambda')_2=x+1\geq 3$ and $s>0$ we have that $\Delta^2(\lambda')\notin\mathcal{H}(q)$. Therefore we deduce that $\lambda'$ is one of the partitions we already considered above in case (ii). It follows that $(\chi^{\lambda'})_{P_{2^k}}$ has at least $3$ (not necessarily distinct) linear constituents. Since $(\chi^{\lambda})_{Q_{2^k}}=(\chi^{\lambda'})_{Q_{2^k}}$ the proof is now concluded, by Lemma \ref{lem: 2linears}. 
\end{proof}

\medskip

\end{document}